\newcommand\blfootnote[1]{%
  \begingroup
  \renewcommand\thefootnote{}\footnote{#1}%
  \addtocounter{footnote}{-1}%
  \endgroup
}
\newtheorem{theorem}{Theorem}[section]
\newtheorem{proposition}{Proposition}[section]
\newtheorem{lemma}[theorem]{Lemma}
\newtheorem{corollary}[theorem]{Corollary}
\newtheorem{conjecture}[theorem]{Conjecture}
\newtheorem{example}[theorem]{Example}
\DeclareMathOperator{\Ker}{Ker}
\DeclareMathOperator{\rank}{rank}
\DeclareMathOperator{\spec}{sp}
\def\v{\mbox{\boldmath $v$}}
\def\x{\mbox{\boldmath $x$}}
\def\w{\mbox{\boldmath $w$}}
\def\vec0{\mbox{\boldmath $0$}}
\def\A{\mbox{\boldmath $A$}}
\def\B{\mbox{\boldmath $B$}}
\def\C{\mbox{\boldmath $C$}}
\def\D{\mbox{\boldmath $D$}}
\def\I{\mbox{\boldmath $I$}}
\def\J{\mbox{\boldmath $J$}}
\def\L{\mbox{\boldmath $L$}}
\def\M{\mbox{\boldmath $M$}}
\def\O{\mbox{\boldmath $O$}}
\def\T{\mbox{\boldmath $T$}}
\def\U{\mbox{\boldmath $U$}}
\def\W{\mbox{\boldmath $W$}}
\def\I{\mbox{\boldmath $I$}}
\def\J{\mbox{\boldmath $J$}}
\def\1{\mbox{\boldmath $1$}}
\def\Re{\mathbb R}
\newcommand\restr[2]{\ensuremath{\left.#1\right|_{#2}}}
\begin{document}

\title{On the Laplacian spectra of token graphs
\thanks{This research of C. Dalf\'o and M. A. Fiol has been partially supported by 
AGAUR from the Catalan Government under project 2017SGR1087 and by MICINN from the Spanish Government under project PGC2018-095471-B-I00. The research of C. Dalf\'o has also been supported by MICINN from the Spanish Government under project MTM2017-83271-R. The research of C. Huemer was supported by PID2019-104129GB-I00/ AEI/ 10.13039/501100011033 and Gen. Cat. DGR 2017SGR1336. F. J. Zaragoza Martínez acknowledges the support of the National
Council of Science and Technology (Conacyt) and its National System of Researchers (SNI).}
}
\author{
	C. Dalf\'o$^a$, F. Duque$^b$, R. Fabila-Monroy$^c$, M. A. Fiol$^d$,\\ C. Huemer$^e$, A. L. Trujillo-Negrete$^f$, F. J. Zaragoza Mart\'inez$^g$\\
	\\
	{\small $^a$Dept. de Matem\`atica, Universitat de Lleida, Igualada (Barcelona), Catalonia}\\
	{\small {\tt cristina.dalfo@udl.cat}}\\
	{\small $^{b}$ Instituto de Matemáticas, Universidad de Antioquia, Medellín, Colombia}\\
	{\small {\tt rodrigo.duque@udea.edu.co}}\\
	{\small $^{c,f}$Departamento de Matemáticas, Cinvestav, Mexico City, Mexico}\\
	{\small {\tt ruyfabila@math.cinvestav.edu.mx}, {\tt ltrujillo@math.cinvestav.mx}}\\
	{\small $^{d,e}$Dept. de Matem\`atiques, Universitat Polit\`ecnica de Catalunya, Barcelona, Catalonia} \\
	{\small $^{d}$Barcelona Graduate School of Mathematics} \\
	{\small {\tt miguel.angel.fiol@upc.edu}, {\tt clemens.huemer@upc.edu}} \\
	{\small $^{g}$Departamento de Sistemas, Universidad Aut\'onoma Metropolitana Azcapotzalco,}\\
	{\small Mexico City, Mexico}\\
	{\small {\tt franz@azc.uam.mx}}\\
}

\date{}
\maketitle

\blfootnote{
\begin{minipage}[l]{0.3\textwidth} \includegraphics[trim=10cm 6cm 10cm 5cm,clip,scale=0.15]{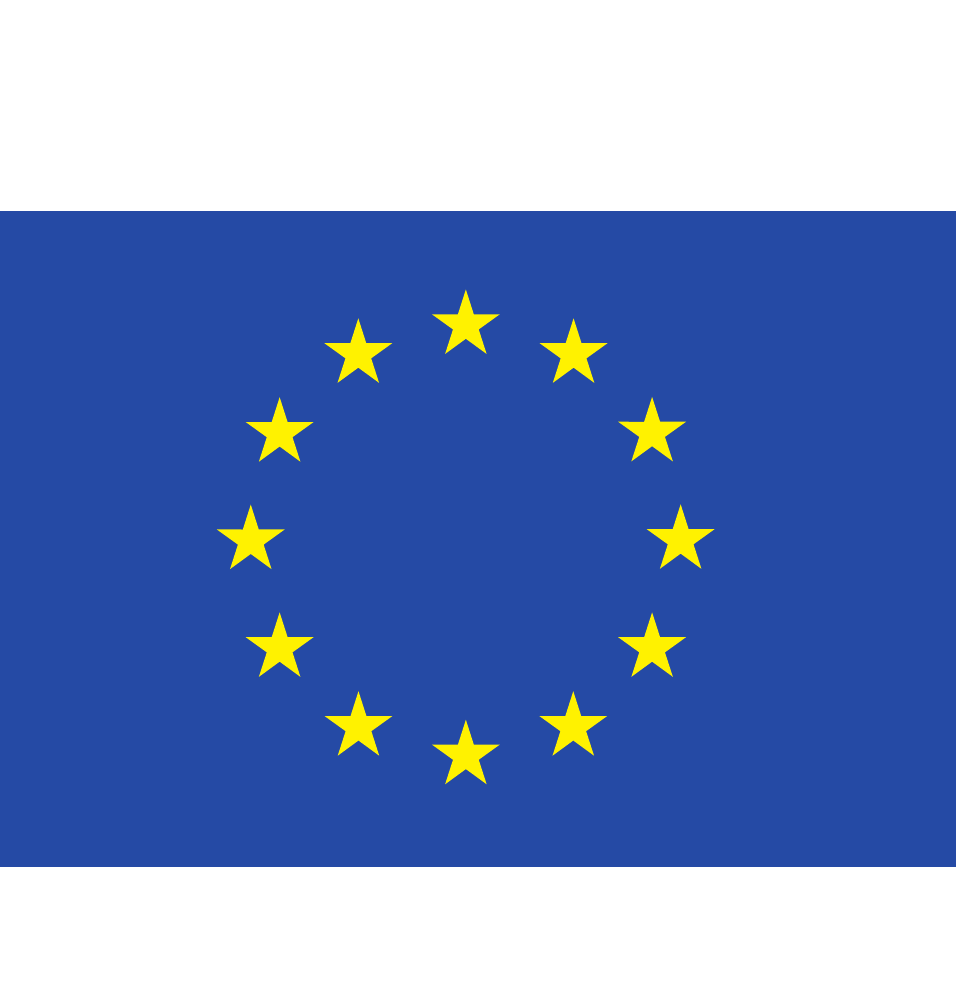} \end{minipage}  \hspace{-2cm} \begin{minipage}[l][1cm]{0.79\textwidth}
   This research has also received funding from the European Union's Horizon 2020 research and innovation programme under the Marie Sk\l{}odowska-Curie grant agreement No 734922.
  \end{minipage}}

\begin{abstract}
We study the Laplacian spectrum of token graphs, also called symmetric powers of graphs. The $k$-token graph $F_k(G)$ of a graph $G$ is the
graph whose vertices are the $k$-subsets of vertices from $G$, two of which being adjacent whenever their symmetric difference is a pair of adjacent vertices in $G$.
In this paper, we give a relationship between the Laplacian  spectra of any two token graphs of a given graph.
In particular, we show that, for any integers $h$ and $k$ such that $1\le h\le k\le \frac{n}{2}$, the Laplacian spectrum of $F_h(G)$ is contained in the Laplacian spectrum of $F_k(G)$.
We also show that the  double odd graphs and doubled Johnson graphs  can be obtained  as token graphs of the complete graph $K_n$ and the star $S_{n}=K_{1,n-1}$, respectively.
Besides, we obtain a relationship between the spectra of the $k$-token graph of $G$ and the $k$-token graph of its complement $\overline{G}$. This generalizes a well-known property for Laplacian eigenvalues of graphs to token graphs.
Finally, the  double odd graphs and doubled Johnson graphs provide two infinite families, together with some others, in which the algebraic connectivities of the original graph and its token graph coincide. Moreover, we conjecture that this is the case for any graph $G$ and its token graph.
\end{abstract}

\noindent{\em Keywords:} Token graph, Laplacian spectrum, Algebraic connectivity, Binomial matrix, Adjacency spectrum, Double odd graph, Doubled Johnson graph, Complement graph.

\noindent{\em MSC2010:} 05C15, 05C10, 05C50.

\section{Introduction}
\label{sec:-1}
Let $G$ be a simple graph with vertex set $V(G)=\{1,2,\ldots,n\}$ and edge set $E(G)$. For a given integer $k$ such that
$1\le k \le n$, the {\em $k$-token graph} $F_k(G)$ of $G$ is the graph whose vertex set $V (F_k(G))$ consists of the ${n \choose k}$
$k$-subsets of vertices of $G$, and two vertices $A$ and $B$
of $F_k(G)$ are adjacent whenever their symmetric difference $A \bigtriangleup B$ is a pair $\{a,b\}$ such that $a\in A$, $b\in B$, and $\{a,b\}\in E(G)$; see Figure \ref{fig1} for an example. 
This naming
comes from an observation in
Fabila-Monroy,  Flores-Pe\~{n}aloza,  Huemer,  Hurtado,  Urrutia, and  Wood \cite{ffhhuw12}, that vertices of $F_k(G)$ correspond to configurations
of $k$ indistinguishable tokens placed at distinct vertices of $G$, where
two configurations are adjacent whenever one configuration can be reached
from the other by moving one token along an edge from its current position
to an unoccupied vertex. Such graphs are also called {\em symmetric $k$-th power of a graph} in Audenaert, Godsil, Royle, and Rudolph \cite{agrr07}; and {\em $n$-tuple vertex graphs} in Alavi,  Lick, and Liu \cite{all02}. They have applications in physics; a
connection between symmetric powers of graphs and the exchange of Hamiltonian operators in
quantum mechanics is given in \cite{agrr07}. Our interest is in relation to the graph
isomorphism problem. It is well known that there are cospectral non-isomorphic
graphs, where often the spectrum of the adjacency matrix of a graph is used. For instance,
Rudolph \cite{r02} showed that there are cospectral non-isomorphic graphs that can
be distinguished by the adjacency spectra of their 2-token graphs, and he also gave an example for the Laplacian spectrum. Audenaert, Godsil, Royle, and by Rudolph \cite{agrr07}
proved that 2-token graphs of strongly regular graphs with the same parameters
are cospectral, and also derived bounds on the (adjacency and Laplacian) eigenvalues of $F_2(G)$ for general graphs. The adjacency spectrum of some $k$-token graphs was also studied by Barghi and
Ponomarenko \cite{bp09}, and Alzaga, Iglesias, and Pignol \cite{aip10}, who proved that for each value of $k$
there are infinitely many pairs of non-isomorphic graphs with cospectral $k$-token
graphs.

In 2012, Fabila-Monroy, Flores, Huemer, Hurtado, Urrutia, and Wood  \cite{ffhhuw12} conjectured that  a given graph $G$ is determined, up to isomorphism, by its $k$-token graph $F_k(G)$ for some fixed $k$. This has been proved true for different classes of graphs and $k=2$  (called {\em double vertex graphs}), such as trees  (see Alavi, Behzad, Erd\H{o}s, Lick \cite{abel91}), regular graphs without $4$-cycles (see Jacob, Goddard, Laskar\cite{jgl07}), 
and cubic graphs (also \cite{jgl07}).

\begin{figure}[t]
\begin{center}
\includegraphics[width=10cm]{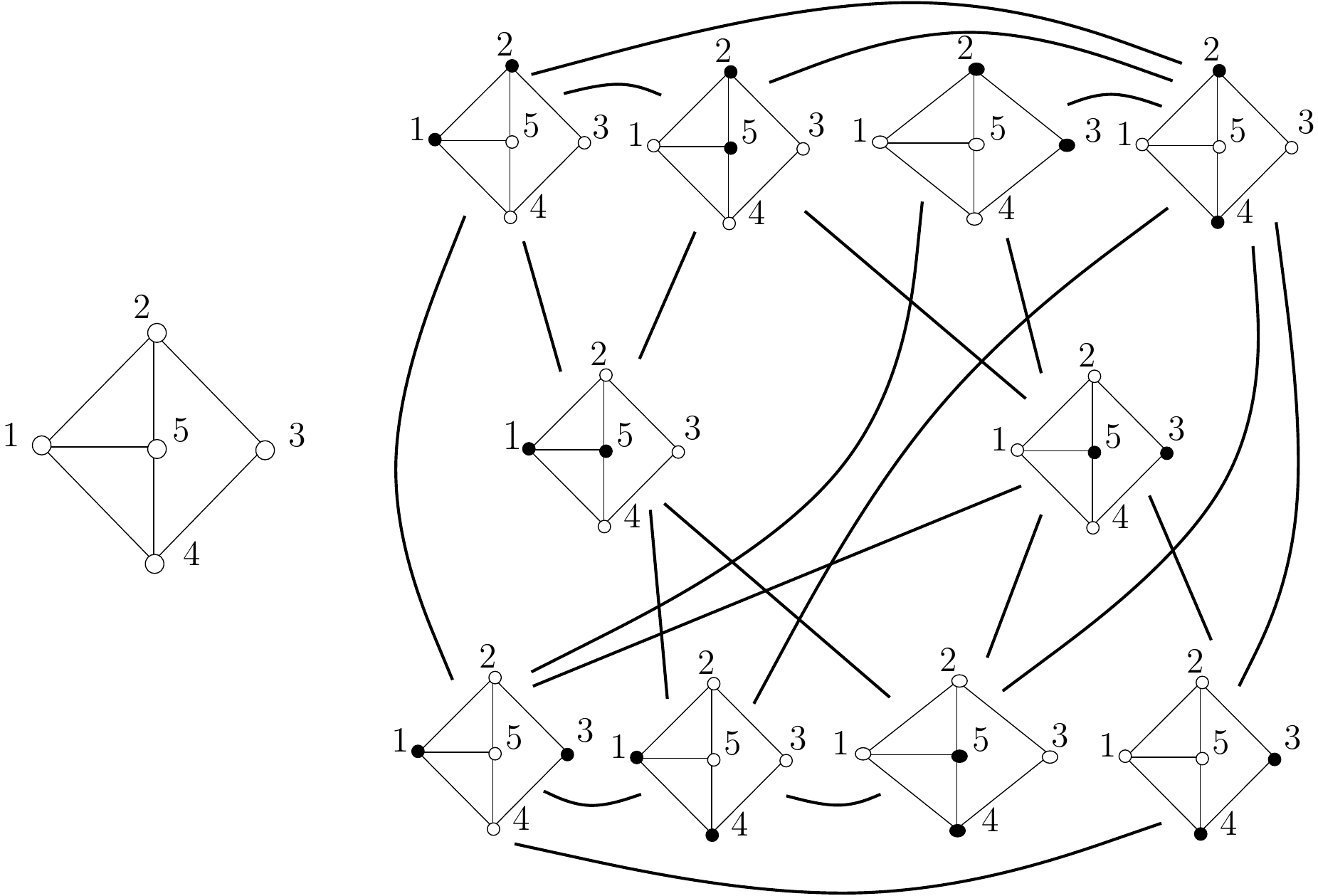}
\caption{A graph $G$ (left) and its $2$-token graph $F_2(G)$ (right). The Laplacian spectrum of $G$ is $\{0,2,3,4,5\}$.  The Laplacian spectrum of $F_2(G)$ is $\{0,2,3^2,4,5^3,7,8\}$. }
\label{fig1}
\end{center}
\end{figure}

In this paper, we focus on the Laplacian spectrum of $F_k(G)$ for any value of $k$.
Recall that the Laplacian matrix $\L(G)$ of a graph $G$ is $\L(G) = \D(G) - \A(G)$, where $\A(G)$ is the adjacency
matrix of $G$, and $\D(G)$ is the diagonal matrix whose diagonal entries are the vertex
degrees of $G$. For a $d$-regular graph $G$, each eigenvalue $\lambda$ of $\L(G)$ corresponds to an eigenvalue $\mu$ of $\A(G)$ via the relation $\lambda=d-\mu$.
In \cite{cflr17}, Carballosa, Fabila-Monroy, Lea\~nos, and  Rivera proved
that, for $1 < k < n - 1$, the $k$-token graph $F_k(G)$ is regular only if $G$ is the complete graph $K_n$
or its complement, or if $k=n/2$ and $G$ is the star graph $K_{1,n-1}$ or its complement.
Then, for most graphs, we cannot directly infer the Laplacian spectrum of $F_k(G)$ from the adjacency spectrum of $F_k(G)$. In fact, when considering the
adjacency spectrum, we find graphs $G$ whose spectrum is not contained
in the spectrum of $F_k(G)$; see Rudolph \cite{r02}. Surprisingly, for the Laplacian spectrum, this holds and it is our first result.

This paper is organized as follows. In the following section, we recall some basic notations and results. In Section \ref{sec:-3}, we prove that the Laplacian spectrum of a graph $G$ is contained in the Laplacian spectrum of its $k$-token graph $F_k(G)$. In Subsection \ref{sec:-3.1}, with the use of a new $(n;k)$-binomial matrix, we give the relationship between the Laplacian spectrum of a graph $G$ and that of its $k$-token graph.
In Section \ref{sec:-4}, we prove that an eigenvalue of a $k$-token graph is also an eigenvalue of the $(k+1)$-token graph for $1\leq k< n/2$. Besides, we define another matrix, called $(n;h,k)$-binomial matrix. With the use of this matrix, it is shown that, for any integers $h$ and $k$ such that $1\le h\le  k\le \frac{n}{2}$, the Laplacian spectrum of $F_h(G)$ is contained in the spectrum of $F_k(G)$.
In Section \ref{sec:-5}, we show that the  double odd graphs and doubled Johnson graphs  can be obtained  as token graphs of the complete graph $K_n$ and the star $S_{n}=K_{1,n-1}$, respectively. In Section \ref{sec:-6}, we obtain a relationship between the Laplacian spectra of the $k$-token graph of $G$ and the $k$-token graph of its complement $\overline{G}$. This generalize a well-known property for Laplacian eigenvalues of graphs to token graphs.
Finally, in the last section, the  double odd graphs and doubled Johnson graphs provide two infinite families, together with some others, in which the algebraic connectivities of the original graph and its token graph coincide. Moreover, we conjecture that this is the case for any graph $G$ and its token graph.

\section{Preliminaries}
\label{sec:-2}

Let us first introduce some notation used throughout the paper.
Given a graph $G=(V,E)$, we indicate with $a\sim b$ that $a$ and
$b$ are adjacent in $G$, and with $N_G(a)$  the set of vertices adjacent to vertex $a\in V$.
Similarly, if $A\subset V$, $N_G(A)$ denotes the (open) neighborhood of $A$, that is, the vertices not in $A$ that are adjacent to vertices of $A$.
\\
For disjoint $X,Y\subset V$, let $E_G(X,Y)$ be the set of edges of $G$ with one end in $X$ and the other end in $Y$. When $Y=\overline{X}$, the complement of $X$, $E_G(X,\overline{X})$ is simply the edge cut defined by $X$, usually denoted by $\partial_G(X)$. If, furthermore, $X=\{a\}$, there is often a slight abuse of notation to use $\partial_G(a)$, and then $\deg_G(a)=|\partial_G(a)|$. Note that, in general, $\deg_G(a)=|N_G(a)|$ only if $G$ is simple (which is our case).
For $B \subset  N_{G}(A)$ and $b \in B$, we denote by $\deg_G(b, A)=|N_G(b)\cap A|$ the number of neighbors of $b$ in $A$.
\\
As usual, the transpose of a matrix $\M$ is denoted by $\M^\top$, the
identity matrix by $\I$, the all-$1$ vector $(1,..., 1)^{\top}$ by $\1$, the all-$1$ (universal) matrix  by $\J$, and the all-$0$ vector and all-$0$ matrix by $\vec0$
and $\O$, respectively.
\\
Let $[n]:=\{1,\ldots,n\}$. Let ${[n]\choose k}$ denote the set of $k$-subsets of $[n]$, the set of vertices of the $k$-token graph.
\\
\\
For our purpose, it is convenient to denote by $W$ the set of all column vectors $\v$ such that  $\v^{\top }\1 = 0$. Any square matrix $\M$ with all zero row sums has an eigenvalue $0$ with corresponding eigenvector $\1$.
When $\M=\L(G)$, the Laplacian matrix of a graph $G$, the matrix is positive semidefinite, and its smallest eigenvalue is known as the {\em algebraic connectivity} of  $G$, here denoted by $\alpha(G)$.
\\
Given a graph $G=(V,E)$ of order $n$,
we say that a vector $\v\in \mathbb{R}^n$ is an \textit{embedding} of $G$ if $\v\in W$.
Note that if $\v$ is a $\lambda$-eigenvector of $G$, with $\lambda>0$, then it is an embedding of $G$.
For a graph $G$ with Laplacian matrix $\L(G)$, and an embedding $\v$ of $G$, let 
$$
\lambda(\v):=\frac{\v^{\top}\L(G)\v}{\v^{\top}\v}.
 $$
The value of $\lambda(\v)$ is known as
 the {\em Rayleigh quotient}.
If $\v$ is an eigenvector of $G$, then its corresponding eigenvalue is $\lambda(\v)$.
We will use the following well-known result:
For an embedding $\v$ of $G$, we have
\[
\lambda(\v)=\frac{\sum\limits_{(i,j)\in E}(\v(i)-\v(j))^2}{\sum\limits_{i\in V}\v^2(i)},\qquad \mbox{and}\qquad
\alpha(G)=\min\{\lambda(\v):\v\in W\},
\]
with the minimum occurring only when $\v$ is an $\alpha(G)$-eigenvector of $G$, and where
$\v(i)$ denotes the entry of $\v$ corresponding to the vertex $i\in V(G)$.
%

\section{The Laplacian spectra of token graphs}
\label{sec:-3}

Our first theorem deals with the Laplacian spectrum of a graph $G$ and its $k$-token graph $F_k(G)$.
\begin{theorem}
\label{th:(1,k)}
Let $G$ be a graph  and $F_k(G)$  its $k$-token graph. Then, the
Laplacian spectrum of $G$ is contained in the Laplacian spectrum of $F_k(G)$.
\end{theorem}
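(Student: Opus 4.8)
The plan is to realize the containment of spectra via an explicit linear map that carries eigenvectors of $\L(G)$ to eigenvectors of $\L(F_k(G))$ with the same eigenvalue, and then to verify that this map is injective so that multiplicities are preserved. Concretely, I would use the $\binom{n}{k}\times n$ inclusion (binomial) matrix $\B$ whose rows are indexed by the $k$-subsets $A\in\binom{[n]}{k}$, whose columns are indexed by $V(G)$, and with $(\B)_{A,a}=1$ if $a\in A$ and $0$ otherwise. For $\v\in\mathbb{R}^n$ indexed by the vertices of $G$, the image $\B\v$ is the vector on $k$-subsets given by $(\B\v)(A)=\sum_{a\in A}\v(a)$.

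The heart of the argument is the intertwining identity $\L(F_k(G))\,\B=\B\,\L(G)$. To prove it I would compute the left-hand side entrywise. Two $k$-subsets are adjacent in $F_k(G)$ exactly when one is obtained from the other by a token move $B=(A\setminus\{a\})\cup\{b\}$ with $a\in A$, $b\notin A$, and $a\sim b$ in $G$; for such a move $(\B\v)(A)-(\B\v)(B)=\v(a)-\v(b)$. Hence
\[
(\L(F_k(G))\B\v)(A)=\sum_{a\in A}\ \sum_{\substack{b\sim a\\ b\notin A}}(\v(a)-\v(b)).
\]
I would then complete the inner sum to all neighbors of $a$ and subtract the internal terms: the full sum $\sum_{a\in A}\sum_{b\sim a}(\v(a)-\v(b))$ equals $\sum_{a\in A}(\L(G)\v)(a)=(\B\L(G)\v)(A)$, while the correction $\sum_{a\in A}\sum_{b\in A,\,b\sim a}(\v(a)-\v(b))$ vanishes, since each internal edge $\{a,b\}\subseteq A$ contributes $(\v(a)-\v(b))+(\v(b)-\v(a))=0$. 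This gives the identity.

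With the identity in hand, if $\L(G)\v=\lambda\v$ then $\L(F_k(G))(\B\v)=\B(\lambda\v)=\lambda(\B\v)$, so $\B\v$ is a $\lambda$-eigenvector of $F_k(G)$ provided $\B\v\neq\vec0$. It remains to check that $\B$ is injective: if $\B\v=\vec0$ then $\sum_{a\in A}\v(a)=0$ for every $k$-subset $A$, and comparing two $k$-subsets that differ in a single element forces $\v$ to be constant, whence $\v=\vec0$ (this works for $1\le k\le n-1$, and the case $k\le n/2$ is covered once one recalls $F_k(G)\cong F_{n-k}(G)$). Being linear and injective, $\B$ sends any linearly independent family of $\lambda$-eigenvectors of $\L(G)$ to a linearly independent family of $\lambda$-eigenvectors of $\L(F_k(G))$; therefore each Laplacian eigenvalue of $G$ appears in $F_k(G)$ with at least the same multiplicity, which is precisely the asserted containment.

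The only genuinely delicate point is the cancellation of the internal edges in the intertwining computation; the rest is bookkeeping. I would be careful to pin down the adjacency convention for $F_k(G)$ so that the token-move description is exactly right, since the whole identity rests on the fact that moving a single token from $a$ to $b$ changes $\B\v$ by precisely $\v(a)-\v(b)$.
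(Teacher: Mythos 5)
Your proof is correct, and its two main ingredients --- the $\binom{n}{k}\times n$ binomial matrix $\B$ and the injectivity argument via $k$-subsets differing in one element --- are exactly those of the paper. The one genuine difference is in how the eigenvalue equation is verified. The paper's own proof of this theorem checks $\L_k\v_F=\lambda\v_F$ row by row, with explicit bookkeeping of $\deg_{F_k}(A)=\sum_{a\in A}\deg_G(a,V\setminus A)$ and a careful split of $\sum_{B\in N_F(A)}\sum_{b\in B}\v_G(b)$ into shared and moved tokens; it only uses the eigenvector hypothesis at the end. You instead prove the stronger intertwining identity $\L(F_k(G))\,\B=\B\,\L(G)$ outright, by writing $(\L_k\B\v)(A)=\sum_{B\sim A}\bigl[(\B\v)(A)-(\B\v)(B)\bigr]$, observing that a token move $B=(A\setminus\{a\})\cup\{b\}$ contributes exactly $\v(a)-\v(b)$, and cancelling the internal edges of $A$ by antisymmetry. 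This identity is precisely the paper's Theorem \ref{theo:general-result2} specialized to $k_1=1$, $k_2=k$ (there proved by a separate entrywise case analysis), so your route effectively merges the "local" proof of Theorem \ref{th:(1,k)} with the matrix identity the paper defers to Section \ref{sec:-4}; it is cleaner and immediately yields the eigenvector statement for every $\v$, not just eigenvectors. Two small remarks: the injectivity conclusion should be spelled out as "constant plus zero $k$-subset sums forces $\gamma k=0$, hence $\v=\vec0$" (your "whence" compresses this), and the parenthetical appeal to $F_k(G)\cong F_{n-k}(G)$ is unnecessary since your kernel argument already works for all $1\le k\le n-1$.
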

\begin{proof}
Let $G$ have order $n=|V(G)|$. Let $A=\{a_1,\ldots,a_k\}$ be a vertex of $F_k(G)$.
Let $\L:=\L(G)$ and $\L_k:=\L(F_k)$ denote the Laplacian matrices of $G$ and $F_k(G)$, respectively. Let $\v_G$ be an eigenvector of $\L$ with eigenvalue $\lambda$.
Using $\v_G$, we construct an eigenvector $\v_{F}(\neq\vec0)$ of $\L_k$, with ${n\choose k}$ entries and the same eigenvalue $\lambda$, as follows:
\begin{equation}
\label{vF(A)}
\v_{F}(A)
=\sum_{a\in A}\v_G(a)=\sum_{i=1}^k \v_G(a_i),\quad
\mbox{ for  $A\in V(F_k)$}.
\end{equation}
This means that vertex $A$ of $F_k(G)$ has tokens placed on vertices $a_1,\ldots,a_k$ of
$G$, and the corresponding entry of vertex $A$ of $F_k(G)$ in $\v_F$ is the sum of the token positions of $A$ in $G$, when embedded on the real line as eigenvector $\v_G$.

Let us now verify that $\v_F$ is indeed an eigenvector of $\L_k$ with associated eigenvalue $\lambda$. In the second part of the proof, we will show that $\v_F\neq \vec0$.
We can assume $\lambda > 0$.
We first observe that
$$
\sum_{A\in V(F_k)}\v_F(A)={n-1\choose k-1}\sum_{a\in V(G)} \v_G(a)=0,
$$
because each token placed at a vertex of $G$ appears in
${n-1\choose k-1}$ vertices of $F_k(G)$. Also, note that $\sum_{a\in V(G)} \v_G(a)=0$ is satisfied since $\v_G$ is an eigenvector orthogonal to the all-$1$ vector $\1$. Then, $\v_F$ is also orthogonal to $\1$.

We show that
$
\L_k  \v_F = \lambda \v_F
$
holds. More precisely, we show that each equation of this linear system holds.
Let us look at one row of matrix $\L_k$. Let this row correspond to vertex $A$ of $F_k$. Let $\deg_{F_k} (A)$ be the vertex degree of $A$ in $F_k$. Let $N_{F}(A)$ be the set of neighbors of $A$ in $F_k$. Then, we need to show that
\begin{equation}
\label{eq:1}
\deg_{F_k}(A)\v_{F}(A)\ -\sum_{B\in N_{F}(A)}\v_{F}(B)=\lambda \v_{F}(A).
\end{equation}
From \eqref{vF(A)}, $\v_{F}(A)=\sum_{a\in A}\v_G(a)$. In $G$, vertices $a\in A$ are adjacent to vertices in $A$ or to vertices in $\bigcup_{B\in N_G(A)}B\setminus A\subset V\setminus A$. We indicate this by
$\deg_G(a)= \deg_G(a,A)+\deg_G(a,V\setminus A)$. Moreover, we have that $\deg_{F_k}(A)=\sum_{a\in A}\deg(a,V\setminus A)$.

Then, Equation \eqref{eq:1} reads as
\begin{equation}
\label{eq:2}
\deg_{F_k}(A)\sum_{a\in A}\v_G(a)\ -\sum_{B\in N_{F}(A)}\sum_{b\in B}\v_G(b)=\lambda \sum_{a\in A}\v_G(a).
\end{equation}

For $B\in  N_{F}(A)$, we know that $A$ and $B$ share $k-1$ tokens. That is, $B =(A\setminus\{a\}) \cup \{b\}$ for some $a\in  A$ and $b \in B\setminus A$.

Then, we can write the second term in \eqref{eq:2} as
$$
\sum_{B\in N_{F}(A)}\sum_{b\in B}\v_G(b)=\sum_{a\in A}\sum_{\stackrel{b\in N_G(A)}{b\sim a}}\v_G(b)
+\sum_{a\in A}\v_G(a)[\deg_{F_k}(A)-\deg_G(a,V\setminus A)].
$$
To show this, we split the elements of $B$ into those that also belong to $A$, and the remaining one. For $b\in N_G(A)$, we count $\deg_G(b, A)$ times
$\v_G(b)$. This gives the first term. For the second term, for each element $a\in A$,
we count $\v_G(a)$ whenever another element of $A$, different from $a$, is adjacent to
an element $b\in N_G(A)$. The number of edges between $A$ and
$V\setminus A$ is $\deg_{F_k}(A)$, from which we subtract the number of edges from $a$ to $V\setminus A$.

Then, we can rewrite \eqref{eq:2} as
\begin{equation}
\label{eq:3}
\sum_{a\in A}\v_G(a)\deg_G(a,V\setminus A)-\sum_{a\in A}\sum_{\stackrel{b\in N_G(A)}{b\sim a}}\v_G(b)=\lambda\sum_{a\in A}\v_G(a).
\end{equation}

Now, if we add $\sum_{a\in A}\v_G(a)\deg_G(a,A)$ to the first term in \eqref{eq:3}, and subtract it from the second term, we get
\begin{equation}
\label{eq:4}
\sum_{a\in A}\v_G(a)\deg_G(a)-\sum_{a\in A}\sum_{b\sim a}\v_G(b)=\lambda\sum_{a\in A}\v_G(a).
\end{equation}

But, since $\v_G$ is an eigenvector of $\L$, for each $a\in A$, it holds that
$$
\v_G(a)\deg_G(a) - \sum_{b\sim a}\v_G(b)=\lambda \v_G(a).
$$
Thus, Equation \ref{eq:4} also holds and, consequently, $\v_F$ is indeed an eigenvector of $\L_k$ with associated eigenvalue $\lambda$.

Now, to prove that the Laplacian spectrum of $G$ is contained in the Laplacian spectrum of $F_k$, we have to show that independent eigenvectors of  $G$ give rise to independent eigenvectors in $F_k(G)$. With this aim, given some integers $n$ and $k$ (with $k\in [n]$), we define the $(n;k)$-\emph{binomial matrix} $\B$. This is a ${n \choose k}\times n$ matrix whose rows are the characteristic vectors of the $k$-subsets of $[n]$ in a given order. Thus, if the $i$-th subset is $A=\{a_1,\ldots,a_k\}$, then
$$
(\B)_{ij}=
\left\lbrace
\begin{array}{ll}
	1, & \mbox{if } a_j\in A,\\
	0, & \mbox{otherwise.}
\end{array}
\right.
$$
Notice that $\v_{F}=\B\v_G$, and, hence, we proved that $\B$ maps eigenvectors of $G$ associated with $\lambda$ to eigenvectors of $F_k(G)$ associated with $\lambda$. Hence, we only have to prove that such a mapping is injective or, equivalently, that $\Ker (\B)=\{\vec0\}$.
Let $\v=(v_1,\ldots,v_n)$ be such that $\B\v=\vec0$. Let $a,b\in [n]$ with $a\neq b$. We claim that $v_a=v_b$. Let $A_a\subset [n]$ be such that $|A_a|=k$, $A_a\ni a$, and $b\not\in A_a$. Let $A_b=(A_a\setminus\{a\})\cup \{b\}$.
Since
$$
(\B\v)_{A_a}=(\B\v)_{A_b}=0\quad\Rightarrow\quad \sum_{c\in A_a}v_c= \sum_{c\in A_b}v_c=0
$$
we have that
$$
v_a=-\sum_{c\in A_a\setminus \{a\}}v_c=-\sum_{c\in A_b\setminus \{a\}}v_c=v_b.
$$
Hence, there exists $\gamma\in \mathbb{R}$ such that $\v=\gamma\1$, where $\1$ is the all-$1$-vector and, from $\vec0=\B\v=k\v$, we have that $\v=\vec0$ and $\Ker (\B)=\{\vec0\}$. This completes the proof of the theorem.
\end{proof}

Figure \ref{fig2}
shows the construction of eigenvector $\v_F$ for the example graph $G$ of Figure \ref{fig1}
and eigenvalue $\lambda = 2$, which has associated eigenvector $\v_G = (1,0,-2,0, 1)$.

\begin{figure}[t]
\begin{center}
\vskip 1cm
\includegraphics[width=10cm]{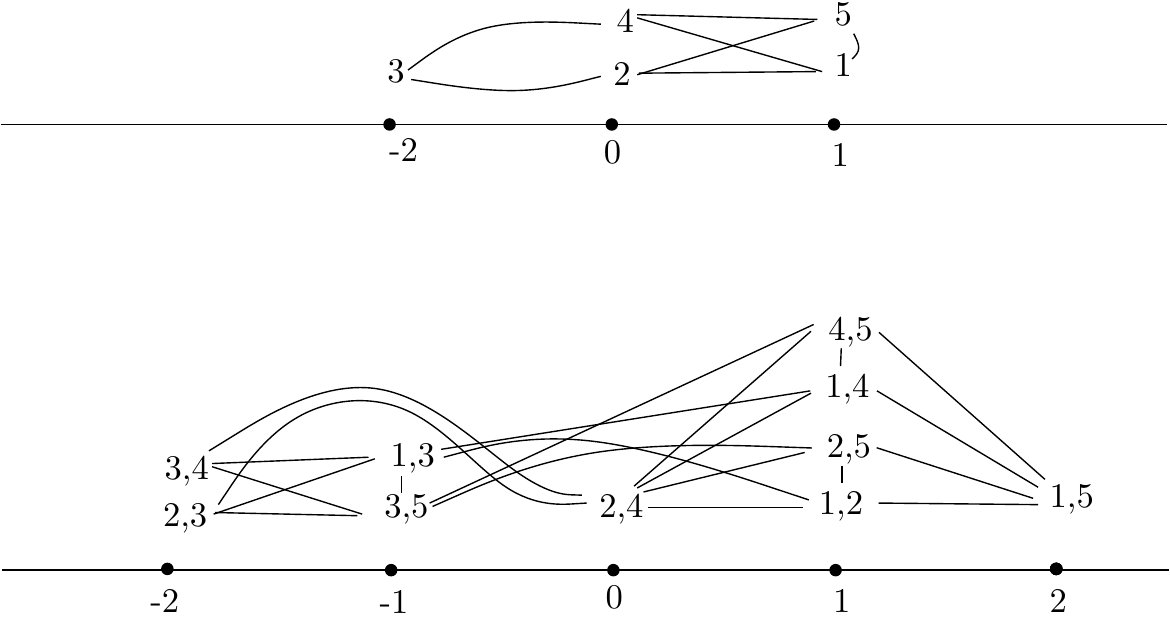}
\caption{Top: The eigenvector $\v_G$ for $\lambda = 2$ in $G$ is represented on the real
line. Bottom: The eigenvector $\v_{F}$ for $\lambda = 2$ in $F_2(G)$ is represented on the real line. Vertices of $F_2(G)$ are indicated by the positions of tokens on $G$. The edges of the graphs are also drawn.}
\label{fig2}
\end{center}
\end{figure}

\subsection{A matrix approach}
\label{sec:-3.1}
The following matrix approach provides an 
alternative compact formulation of the previous result.
First, let us recall a useful construction of the Laplacian matrix.
Given a graph $G=(V,E)$, with $n$ vertices and $m$ edges, consider a given {\em orientation} of it, that is, every edge $e=\{u,v\}\in E$ is replaced by an arc, say $(u,v)$. Then, we say that $u$ is a {\em positive end} of $e$, and $v$ is a {\em negative end} of $e$. The {\em incidence matrix} of $G$, with respect to a given orientation of it, is the
$n\times m$ matrix $\T=(t_{ij})$ with entries
$$
(\T)_{ij}=\left\{
\begin{array}{rl}
+1 & \mbox{if the vertex $u_i$ is a positive end of the edge $e_j$,}\\
-1 & \mbox{if the  vertex $u_i$ is a negative end of the edge $e_j$,}\\
0 & \mbox{otherwise}.\\
\end{array}
\right.
$$
Thus, each column of $\T$ has only two non-zero entries, $+1$ and $-1$. It is well known that, if $G$ is connected, then $\rank (\T)=n-1$ and the Laplacian of $G$ can be written as $\L=\T\T^{\top}$ (independently of the orientation). For more details about the properties of the incidence matrix, see, for instance, Biggs \cite[Ch. 4]{biggs}.

The following lemma is used to obtain the  result.
\begin{lemma}
	\label{lemma:1}
	The $(n;k)$-binomial matrix $\B$ satisfies
	$$
	\B^{\top} \B = {{n-2} \choose {k-1}}\I + {{n-2} \choose {k-2}} \J.
	$$
\end{lemma}

\begin{proof}
The diagonal entries  of $\B^{\top} \B$ are the number of 1's of each column of $\B$, that is, ${{n} \choose {k}}\frac{k}{n}={{n-1} \choose {k-1}}$. Moreover, the out-diagonal entries of $\B^{\top} \B$ are the common number of 1's between any two different columns of $\B$, which is ${{n-2} \choose {k-2}}$. Consequently, $\B^{\top} \B=\alpha \I+\beta \J$, with $\beta={{n-2} \choose {k-2}}$ and
	$$
	\alpha={{n-1} \choose {k-1}}-{{n-2} \choose {k-2}}={{n-2} \choose {k-1}},
	$$
	as claimed.
\end{proof}

Let $G$ be a graph with $n$ vertices and, for $k\leq \frac{n}{2}$, let $F_k=F_k(G)$ be its $k$-token graph. The following result gives the relationship between the corresponding Laplacian matrices, $\L_1$ and $\L_k$.

\begin{theorem}
\label{theo:main-result}
Given a graph $G$ and its $k$-token graph $F_k$, with corresponding Laplacian matrices $\L_1$ and $\L_k$, and $(n;k)$-binomial matrix $\B$,  the following holds:
\begin{equation}
\label{eq:main-result}
\B^{\top} \L_k \B = {{n-2} \choose {k-1}} \L_1.
\end{equation}
\end{theorem}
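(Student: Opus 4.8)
The plan is to lift the eigenvector correspondence of Theorem \ref{th:(1,k)} to a single matrix identity, and then sandwich it between $\B^\top$ and $\B$. The key observation is that the construction $\v_{F}=\B\v_G$ in the proof of Theorem \ref{th:(1,k)} says precisely that $\B$ intertwines the two Laplacians: whenever $\L_1\v=\lambda\v$, one has $\L_k\B\v=\lambda\B\v=\B\L_1\v$. So the first step is to record the global identity
$$
\L_k\B=\B\L_1
$$
on all of $\mathbb{R}^n$. For the eigenvectors with $\lambda>0$ this is exactly what Theorem \ref{th:(1,k)} establishes, while for the kernel eigenvector $\1$ of $\L_1$ one checks directly that $\B\1=k\1$ and $\L_k\1=\vec0$, so both sides vanish. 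Since $\L_1$ is real symmetric, its eigenvectors form a basis of $\mathbb{R}^n$, and two linear maps agreeing on a basis are equal; hence the intertwining identity holds as stated.

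Second, I would multiply this identity on the left by $\B^\top$ to obtain $\B^\top\L_k\B=\B^\top\B\,\L_1$, and substitute the value of $\B^\top\B$ from Lemma \ref{lemma:1}. This yields
$$
\B^\top\L_k\B=\left({n-2 \choose k-1}\I+{n-2 \choose k-2}\J\right)\L_1={n-2 \choose k-1}\L_1+{n-2 \choose k-2}\J\L_1.
$$

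Finally, I would dispose of the second term by noting that $\J\L_1=\O$: writing $\J=\1\1^\top$ and using that $\L_1$ is a symmetric Laplacian with zero row sums (so $\1^\top\L_1=\vec0^\top$), we get $\J\L_1=\1(\1^\top\L_1)=\O$. Equivalently, each entry of $\J\L_1$ is a column sum of $\L_1$, and every column of a Laplacian sums to zero. This kills the $\J$ term and leaves exactly $\B^\top\L_k\B={n-2 \choose k-1}\L_1$, as claimed.

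The only delicate point, and the place I would be most careful, is upgrading the pointwise eigenvector statement of Theorem \ref{th:(1,k)} to the global matrix identity $\L_k\B=\B\L_1$: one must ensure the correspondence covers the \emph{full} spectrum of $\L_1$, including the eigenvalue $0$, so that the agreement is on a genuine basis of $\mathbb{R}^n$ rather than only on the hyperplane $W$. Everything after that is a two-line computation using Lemma \ref{lemma:1} and the zero-column-sum property of the Laplacian.
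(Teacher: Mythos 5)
Your proof is correct, and it lands between the two arguments the paper actually gives. The paper's primary proof is combinatorial: it factors $\L_k=\T_k\T_k^{\top}$ through the incidence matrix and shows that $\C_k=\B^{\top}\T_k$ is the incidence matrix of $G$ with each column repeated ${n-2\choose k-1}$ times, so that $\C_k\C_k^{\top}={n-2\choose k-1}\T\T^{\top}$. Its secondary proof is algebraic, like yours, but stays at the level of the eigenvector matrix $\U$ (all eigenvectors except $\1$) and runs a chain of equivalences ending in $\J\U\D=\O$. What you do differently is promote the pointwise eigenvector statement of Theorem \ref{th:(1,k)} to the global intertwining identity $\L_k\B=\B\L_1$ before touching $\B^{\top}$; that identity is precisely the paper's later Theorem \ref{theo:general-result2} (specialized to $k_1=1$, $k_2=k$), and your final two lines reproduce its Corollary \ref{coro:Lk-L1}$(ii)$. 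So your route buys the stronger statement for free, at the cost of the basis argument you flag yourself. One small point there: for disconnected $G$ the $0$-eigenspace of $\L_1$ is larger than $\mathrm{span}(\1)$, so checking $\1$ alone does not cover a full eigenbasis. This is harmless --- the local computation in the proof of Theorem \ref{th:(1,k)} never actually uses $\lambda>0$ (that hypothesis is only invoked to get orthogonality to $\1$), and alternatively one can check directly that for a component indicator vector $\v$ the function $A\mapsto|A\cap C|$ is constant on components of $F_k(G)$, hence $\B\v\in\Ker(\L_k)$ --- but you should say so explicitly rather than rely on the single vector $\1$. Everything else, including $\J\L_1=\1(\1^{\top}\L_1)=\O$ and the use of Lemma \ref{lemma:1}, matches the paper.
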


\begin{proof}
Let us first give a combinatorial proof of the result.
With this aim, the following simple fact from the definition of $F_k$ is relevant.
\begin{itemize}
\item[{\bf F1.}]
For any $a,b\in [n]$, the edges $\{A,B\}$ of $F_k$, such that $a\in A$ and $b\in B$ are necessarily of the form $\{\{a\}\cup X, \{b\}\cup X\}$, where $|X|=k-1$ and $a\sim b$ in $G$.   Thus, the number of such edges is ${n-2\choose k-1}$, which is precisely the coefficient of $\L_1$ in \eqref{eq:main-result}.
From this, we can state that each edge of $G$ becomes ${n-2\choose k-1}$ edges in $F_k(G)$.
\end{itemize}
Now, for any given orientation of $F_k$, consider its incidence matrix $\T_k$, so that
$\L_k=\T_k\T_k^{\top}$, and the left-hand term of \eqref{eq:main-result} becomes
\begin{equation}
\label{Ck}
\B^{\top}\T_k\T_k^{\top} \B=\C_k\C_k^{\top},
\end{equation}
where $\C_k=\B^{\top}\T_k$. Now, to prove the result, it suffices to show that $\C_k$ is a kind of `multiple incidence matrix' of $G$. More precisely, $\C_k$ corresponds to the incidence matrix of $G$, where each column has been repeated ${n-2\choose k-1}$ times (in some order and possibly interchanging its entries $\pm1$).
Indeed, for a given vertex $a$ of $G=(V,E)$ and edge ${\cal E}$ of $F_k=(V_k,E_k)$, we have
$$
(\C_k)_{a{\cal E}}=\sum_{A\in V_k}(\B^{\top})_{aA}(T_k)_{X{\cal E}}=
\left\{
\begin{array}{ll}
0 & \mbox{if $a\not\in A$,}\\
\displaystyle\sum_{A\ni a}(T_k)_{A{\cal E}} & \mbox{otherwise.}
\end{array}
\right.
$$
Moreover, in the second case, the edge ${\cal E}$ (seen as a $(k+1)$-subset of $[n]$) is of the form ${\cal E}=\{a\}\cup X\cup \{b\}$ for some $X\subset [n]$ with $|X|=k-1$ and $a,b\in [n]$, $b\neq a$. Then, depending on $b$, we have the following different possibilities:
\begin{enumerate}
\item
If $b\sim a$ in $G$ and $A=\{a\}\cup X$ is a positive end of the given orientation of $F_k$, then
$(\C_k)_{a{\cal E}}=1+0=1$;
\item
If $b\sim a$ in $G$ and $A=\{a\}\cup X$ is a negative end of the same orientation of $F_k$, then
$(\C_k)_{a{\cal E}}=-1+0=-1$;
\item
If $b\sim c$ in $G$ for some $c\in X$, then  $(C_k)_{a{\cal E}}=+1-1=0$.
\end{enumerate}
Besides, for fixed vertices $a,b$ of $G$ such that $a\sim b$, cases 1 and 2 appears as many times as ${\cal E}=\{a\}\cup X\cup \{b\}$, that is,  ${n-2\choose k-1}$ times (Fact {\bf 1}).
In other words, for every of the ${n-2\choose k-1}$ (oriented) edges ${\cal E}=\{a\}\cup X\cup \{b\}$ of $F_k(G)$, we have a column of $\C_k$ corresponding to an edge $\{a,b\}$ of $G$ with exactly two non-zero entries, $+1$ and $-1$, as required.


Alternatively, we can also prove \eqref{eq:main-result}  by using Theorem \ref{th:(1,k)}.
Let $\U$ be the $n\times (n-1)$ matrix whose columns are the normalized eigenvectors of $\L_1$ except the all-1 vector.
Let $\D$ be the diagonal $(n-1)\times (n-1)$ matrix of the corresponding eigenvalues. Then,

\begin{align*}
\eqref{eq:main-result} \ \B^{\top} \L_k \B = {{n-2} \choose {k-1}} \L_1 & \quad \Leftrightarrow \quad \B^{\top} \L_k \B \U = {{n-2} \choose {k-1}} \L_1 \U \\
 &  (\mbox{since } \rank (\B)=n,\ \rank (\U) =\rank (\L_1)=n-1),\\
		& \quad \Leftrightarrow \quad \B^{\top} \B \U \D = {{n-2} \choose {k-1}} \U \D \quad (\mbox{by Theorem } \ref{th:(1,k)}),\\
		& \quad \Leftrightarrow \quad \left({{n-2} \choose {k-1}} \I + {{n-2} \choose {k-2}} \J\right)  \U  \D = {{n-2} \choose {k-1}} \U \D\\
 &  (\mbox{by Lemma } \ref{lemma:1}).
\end{align*}
The last equality holds because $\J \U  \D =\O$,
since all columns of $\U $ are orthogonal to the all-1 eigenvector corresponding to the eigenvalue 0.
\end{proof}


\begin{corollary}
\label{coro1}
Given a graph $G(\cong F_1)$ and its $k$-token graph $F_k$, with corresponding Laplacian matrices $\L_1$ and $\L_k$, and $(n;k)$-binomial matrix $\B$,  the following implications hold:
\begin{itemize}
\item[$(i)$]
 If $\v$  is a $\lambda$-eigenvector of $ \L_1$, then $ \B\v$ is a  $\lambda$-eigenvector of $\L_k$.
\item[$(ii)$]
If $\w$  is a $\lambda$-eigenvector of $\L_k$ and $\B^{\top}\w\neq \vec0$, then  $\B^{\top}\w$  is a  $\lambda$-eigenvector of $\L_1$.
\end{itemize}
\end{corollary}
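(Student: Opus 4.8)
The plan is to derive both implications from the results already established, avoiding any further direct manipulation of $\L_k$. Implication $(i)$ is, in effect, a restatement of Theorem \ref{th:(1,k)}: its proof shows precisely that if $\v$ is a $\lambda$-eigenvector of $\L_1=\L(G)$, then $\B\v$ satisfies $\L_k(\B\v)=\lambda(\B\v)$. The only extra point is that $\B\v$ be a genuine eigenvector, i.e. $\B\v\neq\vec0$; this is immediate from $\Ker(\B)=\{\vec0\}$ (also shown there) together with $\v\neq\vec0$. I would also record the borderline case $\lambda=0$, where $\v=\1$ maps to $\B\1=k\,\1$, the null eigenvector of $\L_k$, so that $(i)$ holds for every eigenvalue.

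The key step for $(ii)$ is to upgrade $(i)$ from a statement about individual eigenvectors to a matrix intertwining identity. Since $\L_1$ is symmetric, $\mathbb{R}^n$ has an eigenbasis $\{\v_i\}$ of $\L_1$ with $\L_1\v_i=\lambda_i\v_i$. Applying $(i)$ to each $\v_i$ gives $\L_k\B\v_i=\lambda_i\B\v_i=\B\L_1\v_i$, and since the $\v_i$ span $\mathbb{R}^n$, I obtain
$$
\L_k\B=\B\L_1 .
$$
This one-sided relation is what I will really use, and it is the reason Theorem \ref{theo:main-result} cannot be invoked directly: its two-sided form ${n-2\choose k-1}\L_1=\B^{\top}\L_k\B$ has $\B$ on the right of $\L_k$, so it does not act on an eigenvector of $\L_k$ in the way needed here.

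For $(ii)$ I then transpose. As $\L_1$ and $\L_k$ are symmetric, $(\L_k\B)^{\top}=\B^{\top}\L_k$ and $(\B\L_1)^{\top}=\L_1\B^{\top}$, so the intertwining identity yields
$$
\B^{\top}\L_k=\L_1\B^{\top} .
$$
Hence, if $\w$ is a $\lambda$-eigenvector of $\L_k$, then
$$
\L_1(\B^{\top}\w)=\B^{\top}\L_k\w=\lambda\,\B^{\top}\w ,
$$
so $\B^{\top}\w$ is a $\lambda$-eigenvector of $\L_1$ provided it is nonzero, which is exactly the hypothesis $\B^{\top}\w\neq\vec0$.

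The part I expect to need the most care is the role of that nonzero hypothesis. Unlike $\B$, the map $\B^{\top}$ has a nontrivial kernel (generically ${n\choose k}>n$), so not every eigenvector of $\L_k$ descends to one of $\L_1$: an eigenvalue of $F_k(G)$ lying outside the spectrum of $G$ must have all its eigenvectors annihilated by $\B^{\top}$. The hypothesis $\B^{\top}\w\neq\vec0$ is precisely what excludes this degenerate situation and guarantees that $\lambda$ is inherited by $G$.
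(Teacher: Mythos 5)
Your proof is correct, but it reaches part $(ii)$ by a genuinely different route from the paper. The paper treats $(i)$ as a restatement of Theorem~\ref{th:(1,k)} (offering a second derivation via Theorem~\ref{theo:main-result} and the orthogonality relation $(\B\U)^{\top}(\B\U)=\binom{n-2}{k-1}\I$), and it \emph{defers} $(ii)$ to Corollary~\ref{coro2}, whose proof rests on the general intertwining identity $\B\L_{k_1}=\L_{k_2}\B$ of Theorem~\ref{theo:general-result2} --- established there by a direct entrywise computation --- combined with the identification $\B(n;n-h,k)=\B(n;k,h)^{\top}$ and the isomorphism $F_{n-h}\cong F_h$. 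You instead obtain the special case $\L_k\B=\B\L_1$ of that identity as a purely formal consequence of Theorem~\ref{th:(1,k)}: since the symmetric matrix $\L_1$ admits a full eigenbasis and $(i)$ (including the $\lambda=0$ case $\B\1=k\1$, which you rightly single out) forces $\L_k\B\v_i=\B\L_1\v_i$ on each basis vector, the matrix identity follows, and transposing it immediately gives $\B^{\top}\L_k=\L_1\B^{\top}$ and hence $(ii)$. What your approach buys is self-containment and economy: no entrywise case analysis and no appeal to $F_{n-h}\cong F_h$ are needed for this $h=1$ corollary, and your closing remark correctly explains the role of the hypothesis $\B^{\top}\w\neq\vec0$ (since $\Ker(\B^{\top})$ is nontrivial, eigenvalues of $\L_k$ outside the spectrum of $\L_1$ must have all eigenvectors annihilated by $\B^{\top}$). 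What the paper's heavier machinery buys is generality: Theorem~\ref{theo:general-result2} delivers the intertwining relation for arbitrary pairs $h\le k$, which is needed for Corollary~\ref{coro2} and cannot be extracted from Theorem~\ref{th:(1,k)} alone.
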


\begin{proof}
Statement $(i)$ corresponds to Theorem \ref{th:(1,k)}. Another proof, using Theorem \ref{theo:main-result}, goes as follows. We proved that
$\B^{\top} \L_k \B \U={{n-2} \choose {k-1}} \U \D$. Then, multiplying both terms by $\U ^{\top}$, we get that
$$
(\B\U)^{\top}\L_k(\B\U)={{n-2} \choose {k-1}} \D,
$$
which can be easily extended to a diagonalization of $\L_k$ (by adding the normalized $1$-vector to $\U $) since, by using again Lemma  \ref{lemma:1},
\begin{equation}
\label{ortho}
(\B\U)^
{\top}(\B\U)=\U^{\top}\B^{\top}\B\U={{n-2} \choose {k-1}}\I.
\end{equation}

The case $(ii)$ is proved later in Corollary \ref{coro2}.
\end{proof}

\begin{corollary}
	\label{coro:LkL1}
\begin{itemize}
\item[$(i)$]
The Laplacian spectrum (eigenvalues and their multiplicities) of $\L_1$ is contained in the Laplacian spectrum of $\L_k$.
\item[$(ii)$]
Every eigenvalue $\lambda$ of $\L_k$, having eigenvector $\w$ such that $\B^{\top}\w\neq \vec0$,  is a $\lambda$-eigenvector of   $\L_1$.
\end{itemize}
\end{corollary}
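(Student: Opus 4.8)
The plan is to derive both parts from the eigenvector correspondence of Corollary~\ref{coro1}, combined with the injectivity of $\B$ established inside the proof of Theorem~\ref{th:(1,k)}.

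For part $(i)$, I would reason as follows. By Corollary~\ref{coro1}$(i)$, the linear map $\v\mapsto\B\v$ carries every $\lambda$-eigenvector of $\L_1$ to a $\lambda$-eigenvector of $\L_k$, and by $\Ker(\B)=\{\vec0\}$ this map is injective; hence it sends the $\lambda$-eigenspace of $\L_1$ isomorphically onto a subspace of the $\lambda$-eigenspace of $\L_k$. Therefore the multiplicity of $\lambda$ in $\L_k$ is at least its multiplicity in $\L_1$, which is precisely the asserted containment of spectra with multiplicities. A cleaner quantitative version is already at hand through \eqref{ortho}: with $\U$ the matrix of normalized eigenvectors of $\L_1$ orthogonal to $\1$, the identity $(\B\U)^{\top}(\B\U)=\binom{n-2}{k-1}\I$ shows that the columns of $\B\U$ are nonzero and pairwise orthogonal, hence linearly independent, so no dimension is lost. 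The eigenvalue $0$ needs no separate treatment, since $\B\1=k\1$ is again the $0$-eigenvector of $\L_k$.

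For part $(ii)$, the crux is an intertwining relation between the two Laplacians. Since $\L_1$ is symmetric its eigenvectors span $\mathbb{R}^n$, and each of them is mapped by $\B$ to an eigenvector of $\L_k$ for the same eigenvalue (Corollary~\ref{coro1}$(i)$); thus the linear maps $\L_k\B$ and $\B\L_1$ agree on a basis and so coincide, giving $\L_k\B=\B\L_1$. Taking transposes and invoking the symmetry of both Laplacians yields $\B^{\top}\L_k=\L_1\B^{\top}$. Consequently, for any $\lambda$-eigenvector $\w$ of $\L_k$ I would compute $\L_1(\B^{\top}\w)=\B^{\top}\L_k\w=\lambda\,\B^{\top}\w$, so that $\B^{\top}\w$ is a $\lambda$-eigenvector of $\L_1$ whenever $\B^{\top}\w\neq\vec0$, as required.

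I expect the only delicate point to be the multiplicity bookkeeping in part $(i)$: the correct conclusion is the inequality ``multiplicity in $\L_k$ is at least multiplicity in $\L_1$'', and one should resist claiming equality, since $\L_k$ may well carry extra eigenvalues, and extra copies of shared eigenvalues, that do not arise from $\L_1$. Beyond this, both parts reduce to formal manipulations of the two identities $\L_k\B=\B\L_1$ and $\B^{\top}\B=\binom{n-2}{k-1}\I+\binom{n-2}{k-2}\J$ from Lemma~\ref{lemma:1}.
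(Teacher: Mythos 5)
Your proof is correct, and part $(i)$ matches the paper's argument in substance: the paper also deduces the containment with multiplicities from Theorem~\ref{theo:main-result} together with the orthogonality relation \eqref{ortho}, which guarantees that orthogonal eigenvectors of $\L_1$ (other than $\1$) map to orthogonal, hence independent, eigenvectors of $\L_k$. Your explicit remark that one should only claim the inequality of multiplicities, and your separate treatment of $\lambda=0$ via $\B\1=k\1$, are both sound.

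For part $(ii)$ you take a genuinely different route. The paper simply defers: it declares $(ii)$ to be a reformulation of Corollary~\ref{coro1}$(ii)$, whose proof is in turn postponed to Corollary~\ref{coro2}$(ii)$ and rests on the general intertwining identity $\B\L_{k_1}=\L_{k_2}\B$ of Theorem~\ref{theo:general-result2} (established by a direct combinatorial computation of both sides), applied with $n-h$ in place of $k$ and using $\B(n;n-h,k)=\B(n;k,h)^{\top}$ and $F_{n-h}\cong F_h$. You instead derive the special case $\L_k\B=\B\L_1$ on the spot: since $\L_1$ is symmetric its eigenvectors span $\mathbb{R}^n$, and Corollary~\ref{coro1}$(i)$ shows $\L_k\B$ and $\B\L_1$ agree on that basis, hence coincide; transposing gives $\B^{\top}\L_k=\L_1\B^{\top}$, from which $\L_1(\B^{\top}\w)=\lambda\B^{\top}\w$ follows immediately. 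This is a clean, self-contained argument that removes the forward reference and the combinatorial case analysis, at the price of being tied to $h=1$: the paper's heavier machinery is what later yields the same statement for arbitrary pairs $h\le k$, so your shortcut would not replace Theorem~\ref{theo:general-result2} elsewhere in the paper, but it is entirely adequate for the corollary as stated.
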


\begin{proof}
Statement $(i)$ is a consequence of Theorem  \ref{theo:main-result} and \eqref{ortho}, which, in fact, proves that a set of mutually orthogonal eigenvectors of $G$, different from the all-1 vector, gives rise to a set of mutually orthogonal eigenvectors of $F_k$.
Finally, $(ii)$ is just a reformulation of Corollary \ref{coro1}$(ii)$.
\end{proof}

\begin{figure}[t]
	\begin{center}
		\includegraphics[width=14cm]{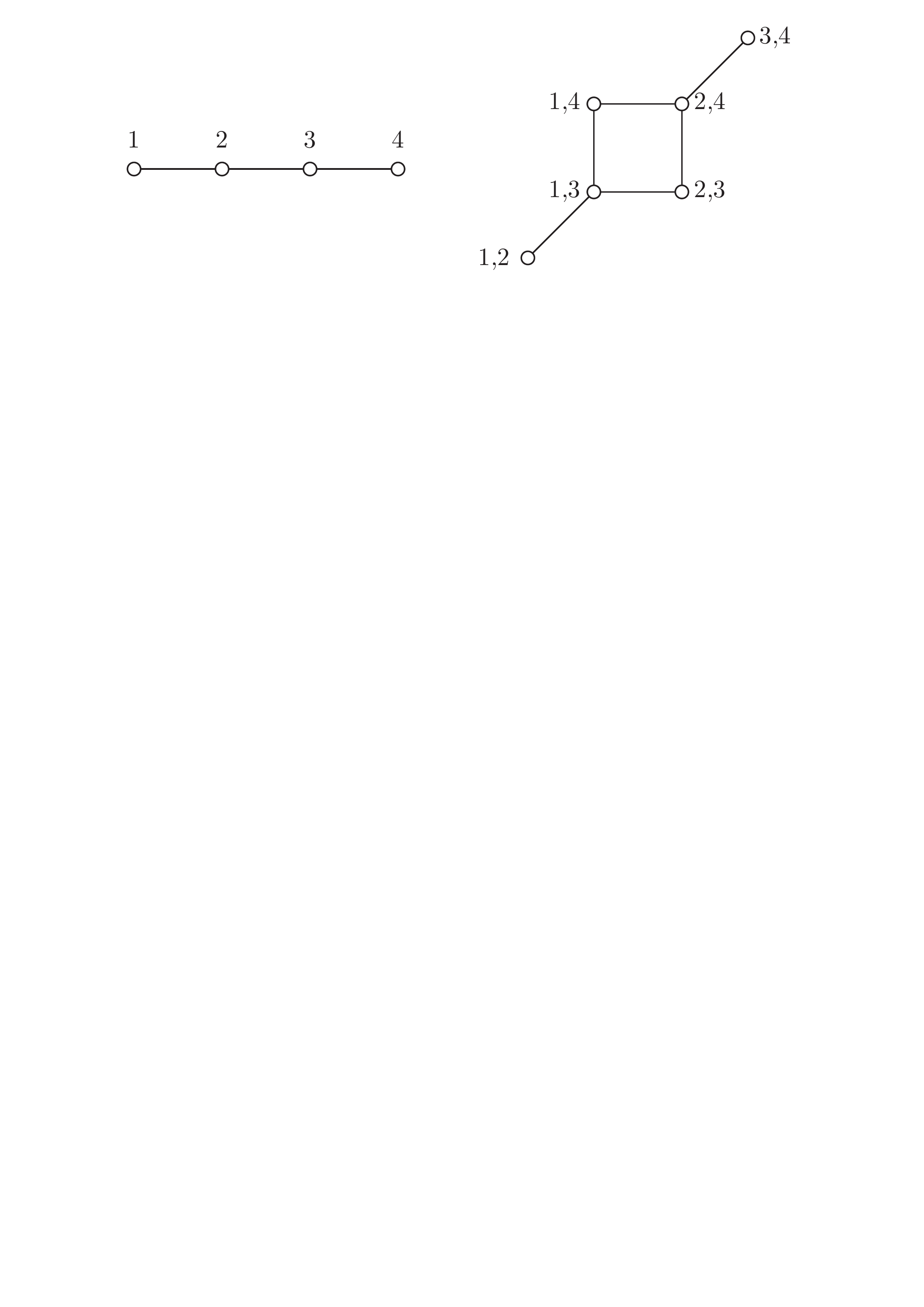}
	\end{center}
	\vskip-16cm
	\caption{The graph $G=P_4$ (left) and it 2-token graph $F_2(G)$ (right).}
	\label{fig:P_4+2token}
\end{figure}

\begin{example}
	\label{ex:1}
	Given the graph $G=P_4$ (the path on 4 vertices), we construct its 2-token graph $F_2(G)$ (see Figure \ref{fig:P_4+2token}). The Laplacian spectra of $G$ and $F_2(G)$ are $\{0,2-\sqrt{2},2,2+\sqrt{2}\}$ and
	$\{0,2-\sqrt2,3-\sqrt3,2,2+\sqrt2,3+\sqrt3\}$. The Laplacian matrices $\L_1$ and $\L_2$ are
	$$
	\L_1=\left(\begin{array}{rrrr}
		1&-1&0&0\\
		-1&2&-1&0\\
		0&-1&2&-1\\
		0&0&-1&1
	\end{array} \right), \qquad
\L_2=\left(\begin{array}{rrrrrr}
	2&-1&0&-1&0&0\\
	-1&3&-1&0&-1&0\\
	0&-1&2&-1&0&0\\
	-1&0&-1&3&0&-1\\
	0&-1&0&0&1&0\\
	0&0&0&-1&0&1
\end{array} \right).
	$$
	The binomial matrix $\B$ is
	$$
	\B=\left(\begin{array}{rrrr}
		1&1&0&0\\
		1&0&1&0\\
		1&0&0&1\\
		0&1&1&0\\
		0&1&0&1\\
		0&0&1&1
	\end{array} \right).
	$$
	We can check that $\B^{\top} \L_2 \B = {4-2 \choose 2-1} \L_1= 2 \L_1$.
\end{example}

\begin{figure}[t]
	\begin{center}
		\includegraphics[width=14cm]{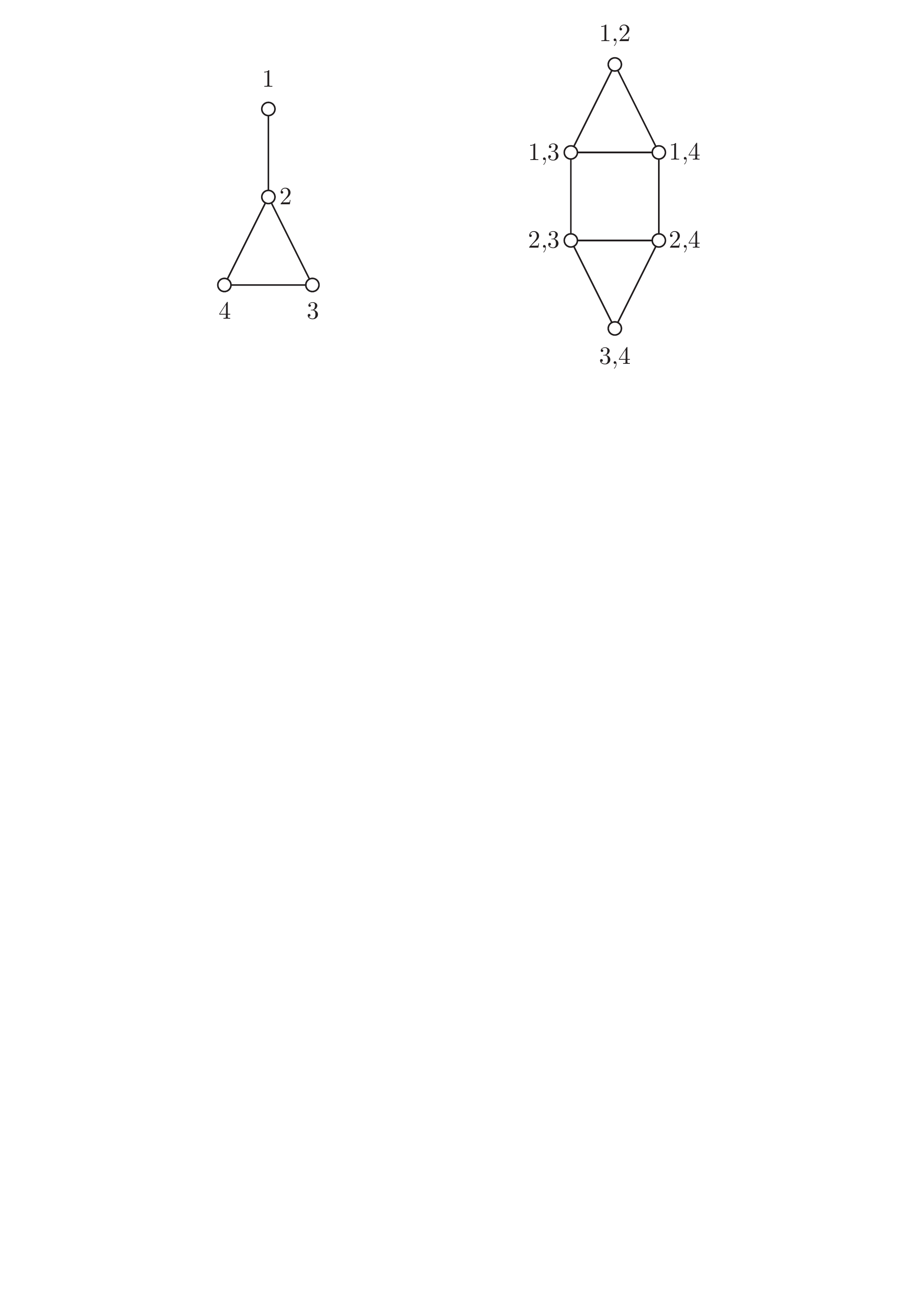}
	\end{center}
	\vskip-14.5cm
	\caption{A graph $G$ (left) and its 2-token graph $F_2(G)$ (right).}
	\label{fig:C_3-amb-aresta+2token}
\end{figure}


\section{A more general result}
\label{sec:-4}
In this section, we prove a stronger result. Namely, for any $1\le k< n/2$, the Laplacian spectrum of the $k$-token graph $F_k(G)$ of a graph $G$ is contained in the Laplacian spectrum of its $(k+1)$-token graph $F_{k+1}(G)$.

\subsection{The local approach}
\label{sec:-4.1}
We begin by proving that every eigenvalue  of $F_k(G)$ is also an eigenvalue of $F_{k+1}(G)$ through a `local analysis', as in the proof of Theorem \ref{th:(1,k)}.

\begin{theorem}
\label{th:(h,k)}
Let $G$ be a graph on $n$ vertices. Let $h,k$ be integers such that $1\le h\le k\le n/2$.
If $\lambda$ is an eigenvalue of $F_{h}(G)$, then $\lambda$ is an eigenvalue of $F_{k}(G)$.
\end{theorem}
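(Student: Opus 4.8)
The plan is to isolate the single increment $h\rightsquigarrow h+1$ and then iterate. By transitivity it suffices to prove that whenever $2h+1\le n$, every eigenvalue of $F_h(G)$ is an eigenvalue of $F_{h+1}(G)$; applying this to the indices $h,h+1,\dots,k-1$ (all of which satisfy $2j+1\le 2k-1\le n$ because $k\le n/2$) then yields the theorem. The single step will mimic the proof of Theorem~\ref{th:(1,k)}, which is exactly the case $h=1$: I build an eigenvector of $\L_{h+1}:=\L(F_{h+1})$ from one of $\L_h:=\L(F_h)$ via a lifting operator $\B_{h,h+1}$, the $\binom{n}{h+1}\times\binom{n}{h}$ \emph{inclusion matrix} whose $(A,S)$-entry is $1$ when the $h$-subset $S$ is contained in the $(h+1)$-subset $A$, and $0$ otherwise. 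Concretely, given a $\lambda$-eigenvector $\v$ of $\L_h$, I set
\[
\w(A)=(\B_{h,h+1}\v)(A)=\sum_{\substack{S\subset A\\ |S|=h}}\v(S)=\sum_{a\in A}\v(A\setminus\{a\}),\qquad A\in V(F_{h+1}),
\]
since the $h$-subsets of the $(h+1)$-set $A$ are precisely the sets $A\setminus\{a\}$. The case $\lambda=0$ is trivial (both Laplacians have $0$ in their spectrum), so I assume $\lambda>0$.

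The core is the local verification that $\L_{h+1}\w=\lambda\w$, checked one row (vertex $A$) at a time, in the spirit of equations \eqref{eq:1}--\eqref{eq:4}. Writing the $A$-th equation as $\deg_{F_{h+1}}(A)\w(A)-\sum_{B\sim A}\w(B)=\lambda\w(A)$, I expand each neighbour $B=(A\setminus\{a\})\cup\{b\}$ (with $a\in A$, $b\notin A$, $a\sim b$) and split its $h$-subsets according to whether they contain the new token $b$. The plan is then to compare this, term by term, with the sum over $a\in A$ of the eigenvalue equation for $\L_h$ at the $h$-subset $A\setminus\{a\}$. Three families of terms appear. The ``external-degree'' contributions built from $\deg_G(\cdot,V\setminus A)$ match after interchanging the order of summation (using $\deg_{F_{h+1}}(A)=\sum_{c\in A}\deg_G(c,V\setminus A)$). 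The ``internal-degree'' terms $\sum_{a\in A}\deg_G(a,A)\,\v(A\setminus\{a\})$ arising from $\deg_{F_h}(A\setminus\{a\})$ cancel against the identical sum produced by those $F_h$-neighbours of $A\setminus\{a\}$ obtained by re-inserting $a$ itself. Finally, the mixed terms $\v\big((A\setminus\{p,q\})\cup\{r\}\big)$, where two tokens of $A$ are removed and one outside token is added, cancel between the two sides because each occurs with the same coefficient $[r\sim p]+[r\sim q]$. I expect this bookkeeping of the double sums --- and in particular confirming that the mixed terms cancel with matching multiplicities --- to be the main obstacle, exactly as the analogous count was the delicate point in Theorem~\ref{th:(1,k)}.

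It remains to show $\w\neq\vec0$, that is, that the lifting does not collapse the eigenvector. Here I would invoke that the set-inclusion matrix $\B_{h,h+1}$ has full column rank $\binom{n}{h}$ whenever $2h+1\le n$ (the classical full-rank property of inclusion matrices), so that $\B_{h,h+1}$ is injective and $\w=\B_{h,h+1}\v\neq\vec0$ as soon as $\v\neq\vec0$. As a self-contained alternative in the spirit of Lemma~\ref{lemma:1}, one can compute directly that
\[
\B_{h,h+1}^{\top}\B_{h,h+1}=(n-h)\I+\A\big(J(n,h)\big),
\]
where $\A(J(n,h))$ is the adjacency matrix of the Johnson graph on $h$-subsets; since this Gram matrix is positive semidefinite and its least eigenvalue equals $n-2h>0$ precisely when $2h+1\le n$, it is positive definite and $\B_{h,h+1}$ is injective. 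Either way, combining the local verification with injectivity gives $\lambda\in\spec(\L_{h+1})$, and iterating the single step from $h$ up to $k$ (every intermediate index being $\le n/2$, so that injectivity holds at each stage) completes the proof.
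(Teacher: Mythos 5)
Your proposal is correct and follows essentially the same route as the paper: reduce to the single step $h\rightsquigarrow h+1$, lift an eigenvector through the $(n;h+1,h)$-binomial (set-inclusion) matrix, verify the eigenvalue equation row by row via the same cancellation of internal and mixed terms, and invoke the full column rank of the inclusion matrix for injectivity (which the paper defers to its matrix-approach subsection, citing de Caen). Your explicit treatment of the non-vanishing of the lifted vector is a welcome completion of the local argument, but it introduces no genuinely new method.
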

\begin{proof}
It suffices to prove the result for $F_k:=F_k(G)$ and $F_{k+1}:=F_{k+1}(G)$, where $1\le k\le n/2-1$.
Let $\v_k$ be an eigenvector of $F_k$ with eigenvalue $\lambda$. Then, we define $\v_{k+1} $ in the following way:
$$
\v_{k+1}(A):=\sum_{X\in V(F_k): X\subset A} \v_k(X)\quad\mbox{for every vertex $A$ of $F_{k+1}$.}
$$
We want to show that $\v_{k+1}$ is an eigenvector of $F_{k+1}$ with eigenvalue $\lambda$.
Let $\L_k$ and $\L_{k+1}$ be the Laplacian matrices of $F_k$ and $F_{k+1}$, respectively.
Given a vertex $A$ of $F_{k+1}$, let $\L_{k+1}(A)$ denote the row of $\L_{k+1}$ corresponding to $A$. Moreover, let $J_A:=\{X\in V(F_k):X\subset A\}$. Then,
\begin{align}
\L_{k+1}(A)\v_{k+1} &= \deg_{F_{k+1}}(A)\v_{k+1}(A)-\sum_{\stackrel{B\in V(F_{k+1})}{B\sim A}} \v_{k+1}(B) \nonumber\\
 & = \sum_{\stackrel{B\in V(F_{k+1})}{B\sim A}} [\v_{k+1}(A)-\v_{k+1}(B)] \nonumber\\
 & = \sum_{\stackrel{B\in V(F_{k+1})}{B\sim A}} \Big[\sum_{X\in J_A} \v_k(X)-\sum_{Y\in J_B} \v_k(Y)\Big] \label{step0}\\
& = \sum_{X\in J_A}
\sum_{Y\sim X } [\v_k(X)-\v_k(Y))]   \label{key-step}\\
&=\sum_{X\in J_A}\deg_{F_k}(X)\v_k(X)-\sum_{Y\sim X}\v_k(Y) \nonumber
\end{align}
\begin{align}
 & = \sum_{X\in J_A}\lambda\v_k(X)=\lambda\v_{k+1}(A). \nonumber
\end{align}

The reason for Equation \eqref{key-step} is the following. First, notice that, for each $B\sim A$, we have that $A=Z\cup\{a\}$ and $B=Z\cup\{b\}$, where $Z=A\cap B$, and $a\sim b$ in $G$ (recall that $|X|=|Y|=|A\cap B|$).
Now, if either $X,Y\subset A$ or $X,Y\subset B$ (including the possible cases where $X=Y=Z$ or $Y\sim X$), both terms $\v_k(X)$ and $\v_k(Y)$ appear in the second and third sum of \eqref{step0}, giving zero.
Otherwise, for each $X\in J_A$ such that $X\ni a$, there is one $Y\in J_B$ such that $Y\ni b$,
$X\bigtriangleup Y= A\bigtriangleup B$, and $Y\sim X$.

Consequently, $\v_{k+1}$ is an eigenvector of $F_{k+1}$ with eigenvalue $\lambda$.
\end{proof}

\subsection{The matrix approach}
\label{sec:-4.2}
All previous results can be seen as consequences of the following matricial formulation. First, we  define, for some integers $n$, $k_1$, and $k_2$ (with $1\le k_1<k_2<n$), the $(n;k_2,k_1)$-\emph{binomial matrix} $\B=\B(n;k_2,k_1)$. This is a ${n \choose k_2}\times {n \choose k_1}$ $(0,1)$-matrix, whose rows are indexed by the $k_2$-subsets $A\subset [n]$,
and its columns are indexed by the $k_1$-subsets $X\subset [n]$. The entries of $\B$ are
$$
(\B)_{AX}=
\left\lbrace
\begin{array}{ll}
	1 & \mbox{if } X\subset A,\\
	0 & \mbox{otherwise.}
\end{array}
\right.
$$
The transpose of $\B=\B(n;k_2,k_1)$ is known as the {\em set-inclusion matrix}, denoted by $W_{k_1,k_2}(n)$ (see, for instance, Godsil \cite{g95}).
\begin{lemma}
\label{lemma:2}
The matrix $\B$ satisfies the following simple properties.
\begin{itemize}
\item[$(i)$]
The number of 1's of each column of $\B$ is ${n-k_1\choose k_2-k_1}$.
\item[$(ii)$]
The common number of 1's of any two columns of $\B$, corresponding to $k_2$-subsets of $[n]$ whose intersection has $k_1-1$ elements, is ${n-k_1-1 \choose k_2-k_1-1}$.
\end{itemize}
\end{lemma}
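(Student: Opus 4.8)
The plan is to prove both combinatorial identities in Lemma \ref{lemma:2} by direct counting, treating the $(n;k_2,k_1)$-binomial matrix $\B$ as a set-inclusion incidence structure. Part $(i)$ asks for the number of $1$'s in a fixed column, which is indexed by some $k_1$-subset $X\subset[n]$. An entry $(\B)_{AX}$ equals $1$ precisely when $X\subset A$ for the $k_2$-subset $A$ indexing that row. Thus the count is simply the number of $k_2$-subsets $A$ of $[n]$ that contain the fixed $k_1$-set $X$. To build such an $A$, I keep all of $X$ and choose the remaining $k_2-k_1$ elements freely from the $n-k_1$ elements of $[n]\setminus X$. This gives $\binom{n-k_1}{k_2-k_1}$, as claimed.

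For part $(ii)$, I would fix two distinct columns indexed by $k_1$-subsets $X$ and $X'$ whose intersection $X\cap X'$ has exactly $k_1-1$ elements (so $X\cup X'$ has $k_1+1$ elements). The common number of $1$'s is the number of $k_2$-subsets $A$ with $X\subset A$ \emph{and} $X'\subset A$, equivalently $X\cup X'\subset A$. Since $|X\cup X'|=k_1+1$, I count the $k_2$-subsets containing this fixed $(k_1+1)$-set: keep all $k_1+1$ forced elements and choose the remaining $k_2-(k_1+1)=k_2-k_1-1$ elements from the $n-(k_1+1)=n-k_1-1$ remaining vertices, yielding $\binom{n-k_1-1}{k_2-k_1-1}$, exactly as stated.

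There is really no hard obstacle here; both parts reduce to the elementary observation that counting $k_2$-subsets of $[n]$ containing a prescribed $t$-set amounts to choosing $k_2-t$ elements from $n-t$ remaining ones. The only point requiring care is the hypothesis in $(ii)$ that $|X\cap X'|=k_1-1$: this is precisely what forces $|X\cup X'|=k_1+1$, which in turn pins down the binomial coefficient. Without that hypothesis the union could have a different size and the formula would change accordingly, so I would state this relationship explicitly before performing the count. I would also note in passing that the phrase ``corresponding to $k_2$-subsets'' in the statement is a minor slip for ``$k_1$-subsets'' (the columns are indexed by $k_1$-sets), and carry out the argument for the $k_1$-sets indexing the two chosen columns.

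The anticipated role of this lemma mirrors that of Lemma \ref{lemma:1} in the $(n;k)$ case: these two counts give the diagonal and the relevant off-diagonal entries of a Gram-type product such as $\B^\top\B$ (or of $\B\B^\top$), which will presumably be assembled into a linear-algebraic identity relating $\L_h$ and $\L_k$ in the spirit of Theorem \ref{theo:main-result}. Consequently, the computation should be kept clean and stated exactly in the binomial form above so it plugs directly into the subsequent matrix manipulation.
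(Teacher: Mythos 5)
Your proof is correct and follows essentially the same route as the paper: the paper's own argument for $(i)$ notes (as an alternative to a quick double-counting computation) that the column sum is the number of $k_2$-subsets containing a given $k_1$-subset, and for $(ii)$ it counts $k_2$-subsets containing a fixed $(k_1+1)$-subset, exactly as you do. Your observation that the columns are indexed by $k_1$-subsets (so the statement's ``$k_2$-subsets'' is a slip) is also correct.
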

\begin{proof}
$(i)$ The number of 1's of each column can be computed as (\#rows)$\times$(\#1's per row)/(\# columns), which gives
$
{n\choose k_2}{k_2\choose k_1}/{n\choose k_1}={n-k_1\choose n-k_2}={n-k_1\choose k_2-k_1}.
$
Alternatively, notice that this is just the number of $k_2$-subsets of $[n]$ containing a given $k_1$-subset of $[n]$.
Similarly, the number of 1's in $(ii)$ equals the number of $k_2$ subsets containing a $(k_1+1)$-subset of $[n]$.
\end{proof}
The new matrix $\B$ allows us to give the following result that can be seen as a generalization of Theorem \ref{theo:main-result} (see also Corollary \ref{coro:Lk-L1}).

\begin{theorem}
\label{theo:general-result2}
Let $G$ be a graph on $n=|V|$ vertices, with $k_1$- and $k_2$-token graphs $F_{k_1}(G)$ and $F_{k_2}(G)$, where $1\le k_1\le k_2\le n$. Let $\L_{k_1}$ and $\L_{k_2}$ be the respective Laplacian matrices, and $\B$ the $(n;k_2,k_1)$-binomial matrix. Then,  the following holds:
\begin{equation}
\label{eq:general-result2}
\B \L_{k_1} =  \L_{k_2}\B.
\end{equation}
\end{theorem}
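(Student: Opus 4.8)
The plan is to prove the matrix identity \eqref{eq:general-result2} directly, by comparing the two sides entry by entry; this is the matricial counterpart of the row-by-row ``local'' verification carried out in the proof of Theorem \ref{th:(h,k)}, now freed from any reference to a particular eigenvector. I would index the rows of both sides by the $k_2$-subsets $A\subset[n]$ and the columns by the $k_1$-subsets $X\subset[n]$. Writing each Laplacian as degree minus adjacency and unfolding the definition of $\B$, the $(A,X)$ entries become
\[
(\B\L_{k_1})_{AX}=\sum_{\substack{Y\subset A\\ |Y|=k_1}}(\L_{k_1})_{YX},
\qquad
(\L_{k_2}\B)_{AX}=\sum_{\substack{C\supset X\\ |C|=k_2}}(\L_{k_2})_{AC}.
\]
The first step is to record the two facts that turn everything combinatorial: the adjacency rule in $F_k$ (so that $Y\sim X$ forces $X\bigtriangleup Y=\{x,x'\}$ with $x\sim x'$ in $G$), and the degree formula $\deg_{F_k}(Y)=|E_G(Y,V\setminus Y)|$, i.e. the degree of a token configuration equals the size of the edge cut it defines in $G$.

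I would then split according to the position of $X$ relative to $A$. When $X\subset A$, the diagonal terms survive on both sides, contributing $\deg_{F_{k_1}}(X)=|E_G(X,V\setminus X)|$ on the left and $\deg_{F_{k_2}}(A)=|E_G(A,V\setminus A)|$ on the right; the off-diagonal terms count, respectively, the neighbours $Y\subset A$ of $X$ and the neighbours $C\supset X$ of $A$, which work out to $|E_G(X,A\setminus X)|$ and $|E_G(A\setminus X,V\setminus A)|$. Using the disjoint decompositions $V\setminus X=(A\setminus X)\sqcup(V\setminus A)$ and $A=X\sqcup(A\setminus X)$, both sides collapse to the common value $|E_G(X,V\setminus A)|$.

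When $X\not\subset A$ there are no diagonal contributions on either side, and a short count shows that a valid neighbour can occur only when $|X\setminus A|=1$, say $X\setminus A=\{x_0\}$: the only admissible $Y$ replace $x_0$ by a vertex of $A\setminus X$ adjacent to it, and the only admissible $C$ adjoin $x_0$ to $A$ in place of a vertex of $A\setminus X$ adjacent to it, so both sides equal $-\deg_G(x_0,A\setminus X)$, while for $|X\setminus A|\ge 2$ both vanish. The one genuinely delicate point is the bookkeeping in the case $X\subset A$, where the diagonal (degree) term and the off-diagonal (adjacency) term must be combined before the edge-cut decompositions apply; forcing the two cuts to cancel down to $|E_G(X,V\setminus A)|$ on each side is where the identity really lives, so it is worth writing those decompositions out explicitly rather than leaving them implicit.

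As an alternative route that sidesteps the heaviest case analysis, I would first establish the identity for consecutive orders, $\B(n;k+1,k)\,\L_k=\L_{k+1}\,\B(n;k+1,k)$. This is exactly what the computation in the proof of Theorem \ref{th:(h,k)} yields when read as a matrix equation: the passage from \eqref{step0} down to the line $\sum_{X\in J_A}(\L_k\v_k)(X)$ is purely structural, and the eigenvector hypothesis is used only in the final line, so it may be dropped. One then composes using the factorisation $\B(n;k_2,j)\,\B(n;j,k_1)=\binom{k_2-k_1}{j-k_1}\,\B(n;k_2,k_1)$, which follows from counting $j$-subsets $Y$ with $X\subset Y\subset A$; telescoping the consecutive-order relation from $k_1$ up to $k_2$ and cancelling the resulting nonzero binomial factors recovers the general statement.
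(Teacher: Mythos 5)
Your main argument is, in structure and in every detail, the paper's own proof: the same entry-by-entry comparison of $(\B\L_{k_1})_{AX}$ and $(\L_{k_2}\B)_{AX}$, the same three-way split on $|X\cap A|$, the same off-diagonal value $-|N_G(x_0)\cap(A\setminus X)|$ on both sides when $|X\setminus A|=1$, and the same collapse of the diagonal case to the common value $|E_G(X,V\setminus A)|$ via the cut decompositions $V\setminus X=(A\setminus X)\sqcup(V\setminus A)$ and $V\setminus A\subset V\setminus X$. Your alternative route, however, is genuinely different from what the paper does and is also sound: the computation in the proof of Theorem \ref{th:(h,k)} down through \eqref{key-step} is purely combinatorial and uses the eigenvector hypothesis only in its last line, so it really does establish $\B(n;k+1,k)\,\L_k=\L_{k+1}\,\B(n;k+1,k)$ as a matrix identity; and your composition rule $\B(n;k_2,j)\,\B(n;j,k_1)=\binom{k_2-k_1}{j-k_1}\,\B(n;k_2,k_1)$ is correct (the $(A,X)$ entry counts the $j$-sets $Y$ with $X\subset Y\subset A$), so telescoping the consecutive-order identity from $k_1$ to $k_2$ and cancelling the nonzero scalar yields \eqref{eq:general-result2}. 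The telescoping route buys economy — the only nontrivial cancellation is the one already carried out in Section \ref{sec:-4.1} — while the paper's direct route is self-contained and produces the explicit edge-cut interpretation of the common entry, which is what makes the identity transparent.
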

\begin{proof}
Let  $X,X',\ldots$ be vertices of
$F_{k_1}(G)$ and let $A,A',\ldots$ be vertices of $F_{k_2}(G)$, seen as $k_1$- and $k_2$-subsets of $[n]$, respectively. Then, we want to prove that $(\B \L_{k_1})_{AX}=(\L_{k_2}\B)_{AX}$ for every $A\in V_{k_2}$ and $X\in V_{k_1}$. The proof is based on considering the different cases of the intersection of  the set $X$ with $A$.
The first term is (the role of $a\in [n]\simeq V(G)$ is explained afterward):
\begin{align*}
(\B \L_{k_1})_{AX}&=\sum_{X'\in V_{k_1}} (\B)_{AX'}(\L_{k_1})_{X'X}=\sum_{X'\subset A} (\L_{k_1})_{X'X}\\
&=\left\lbrace
\begin{array}{cl}
	0 & \mbox{if \  } |X\cap A|<k_1-1,\qquad\qquad $(a1)$\\
	-|N_G(a)\cap (A\setminus X)| & \mbox{if \  } |X\cap A|=k_1-1,\qquad\qquad $(a2)$\\
	\deg_{F_{k_1}(G)}(X)- |E_G(X,A\setminus X)|  & \mbox{if \  } |X\cap A|=k_1\ (X\subset A),\quad \ \ $(a3)$\\
\end{array}
\right.
\end{align*}
while the second term is:
\begin{align*}
(\L_{k_2}\B)_{AX}&=\sum_{A'\in V_{k_2}} (\L_{k_2})_{AA'}(\B)_{A'X}=\sum_{A'\supset X} (\L_{k_2})_{AA'}\\
&=\left\lbrace
\begin{array}{cl}
	0 & \mbox{if \ } |X\cap A|<k_1-1,\qquad\qquad $(b1)$\\
	-|N_G(a)\cap (A\setminus X)| & \mbox{if \  } |X\cap A|=k_1-1,\qquad\qquad $(b2)$\\
	\deg_{F_{k_2}(G)}(A)- |E_G(A\setminus X,V\setminus A)| & \mbox{if  \ } |X\cap A|=k_1 \ (X\subset A).\quad \ \ $(b3)$
\end{array}
\right.
\end{align*}
Now, let us justify each  of the above equalities, and why $(a3)=(b3)$:
\begin{enumerate}
\item
If $|X\cap A|<k_1-1$, then $(a1)$ and $(b1)$ are zero because neither $X'\subset A$ nor  $A'\supset X$ can be adjacent to $X$ and $A$, respectively.
\item
If $|X\cap A|=k_1-1$, then $X$ is of the form $X=(X\cap A)\cup \{a\}$, where $a\in [n]$ and $a\not\in A$. Then, the expression in $(a2)$ is because $X'\subset A$ can be any set of the form $\{b\}\cup(A\cap X)$, where $b\sim a$ and  $b\in A\setminus X$.
Similarly, the value of $(b2)$ is because every $A'\supset X$ is of the form $(A\setminus \{b\})\cup \{a\}$, where $b$ is as before.
\item
If $X\subset A$, there are two kinds of contributions to $\sum_{X'\subset A}(\L_{k_1})_{X'X}$, which gives $(a3)$.
First, when $X'=X$, we get the term $\deg_{F_{k_1}(G)}(X)$. Second, if $X'\neq X$, then we get one `$-1$' for each $X'$.
In this case, every subset $X'\subset A$ such that $X'\sim X$ in $F_{k_1}(G)$ is of the form $(X\setminus \{a\})\cup \{b\}$, where $a\in X$ and  $b\in A\setminus X$. Thus, the total number of such subsets $X'$ is $|E_G(X,A\setminus X)|$.
Analogously, the value in $(b3)$ contains two terms. When $A'=A$, we have $(\L_{k_2})_{A'A}=\deg_{F_{k_2}(G)}(A)$. Otherwise, every $A'\supset X$ such that $A'\sim A$ in $F_{k_2}(G)$ is of the form $(A\setminus \{b\})\cup \{a\}$, where $b\in A\setminus X$ and $a\in V\setminus A$. Now, the number of such subsets $A'$ is $|E_G(A\setminus X,V\setminus A|$.
\end{enumerate}
Finally, notice that both terms $(a3)$ and $(b3)$ are equal since
\begin{align}
\deg_{F_{k_1}}(X)- |E_G(X,A\setminus X)|
 & = |E_G(X,V\setminus X)|-|E_G(X,A\setminus X)| \nonumber\\
& = |E_G(X,V\setminus A)| \label{eq:setsAX}\\
 & = |E_G(A,V\setminus A)|-|E_G(A\setminus X,V\setminus A)| \nonumber\\
 & =\deg_{F_{k_2}}(A)- |E_G(A\setminus X, V\setminus A)| \nonumber.
\end{align}
For a better understanding of the above equalities, see Figure \ref{fig-dibu-teorema}, where we use the following classes of edges in $G$:\\
\mbox{\hskip .5cm}
$-$ $E_G(X,V\setminus X)$ are the thick and the thin edges.\\
\mbox{\hskip .5cm}
$-$ $E_G(X,A\setminus X)$ are the thin edges.\\
\mbox{\hskip .5cm}
$-$ $E_G(A,V\setminus A)$ are the thick and the dashed edges.\\
\mbox{\hskip .5cm}
$-$ $E_G(A\setminus X,V\setminus A)$ are the dashed edges.\\
\mbox{\hskip .5cm}
$-$ The edges represented in dotted lines are those not in any of the four previous sets.\\
This completes the proof.
\end{proof}

\begin{figure}[t]
	\begin{center}
		\includegraphics[width=12cm]{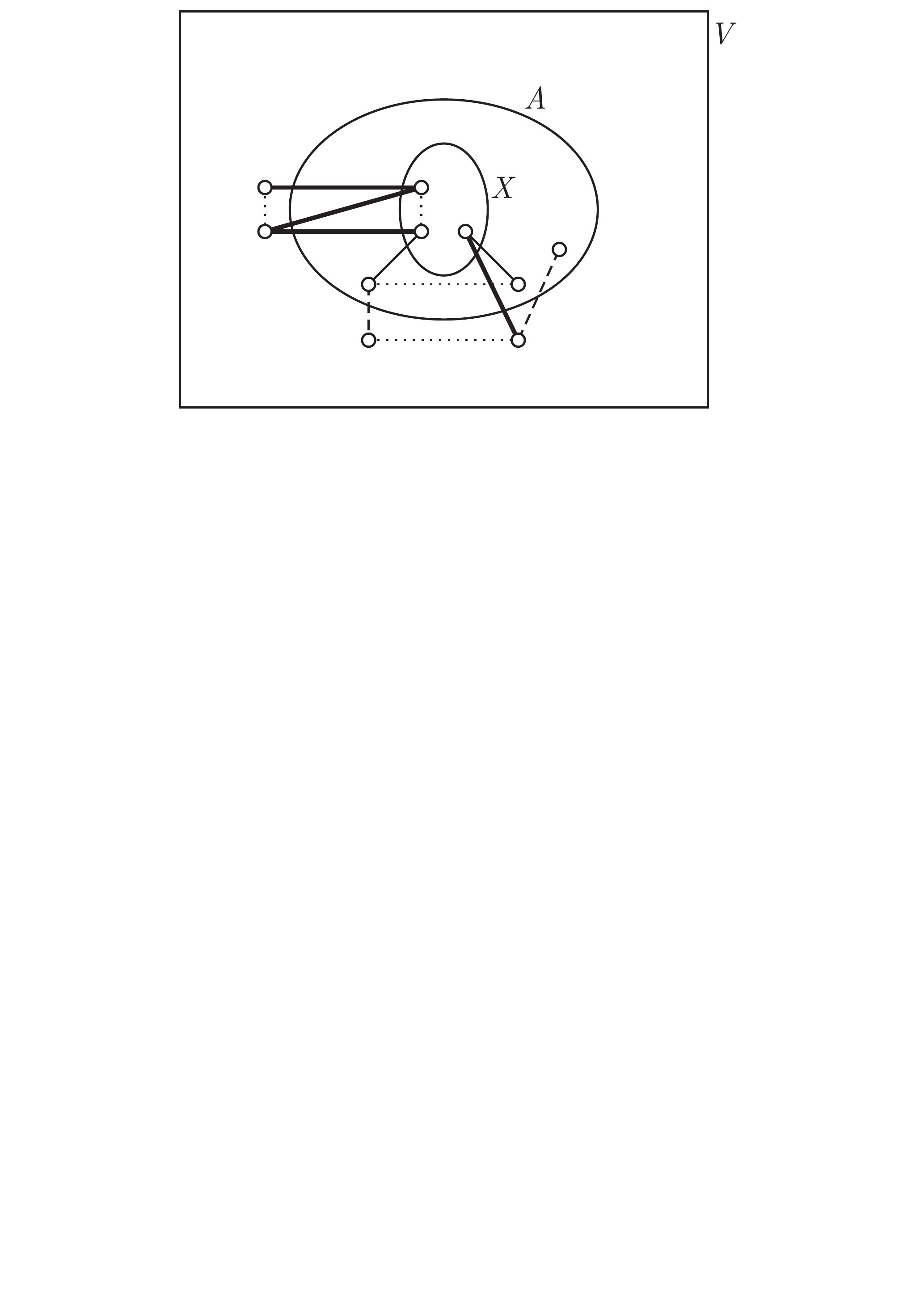}
		\vskip-11.25cm
		\caption{Scheme of the different kind of edges of $G$ in \eqref{eq:setsAX}.}
		\label{fig-dibu-teorema}
	\end{center}
\end{figure}

Let us now see some consequences of this theorem. First, we get again Theorem \ref{theo:main-result}.
\begin{corollary}
\label{coro:Lk-L1}
With the same notation as before, the following holds.
\begin{itemize}
\item[$(i)$]
For every $k_1,k_2$ with $1\le k_1\le k_2\le n$,
\begin{equation}
\label{eq:Lk1-Lk2}
\B^{\top}\L_{k_2}\B=\B^{\top}\B\L_{k_1}.
\end{equation}
\item[$(ii)$]
For	$k_1=1$ and $k_2=k$,
$$
\B^{\top} \L_k \B = {{n-2} \choose {k-1}} \L_1.
$$
\end{itemize}
\end{corollary}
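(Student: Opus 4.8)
The plan is to derive both parts directly from the commutation identity $\B\L_{k_1}=\L_{k_2}\B$ of Theorem \ref{theo:general-result2}, so that essentially no new combinatorial work is needed; the corollary is a purely algebraic consequence.

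For part $(i)$, I would simply left-multiply that identity by $\B^{\top}$. Since $\B\L_{k_1}=\L_{k_2}\B$ is an equality of matrices, premultiplying by $\B^{\top}$ preserves it and yields
$$
\B^{\top}\B\,\L_{k_1}=\B^{\top}\L_{k_2}\B,
$$
which is exactly \eqref{eq:Lk1-Lk2}. This step is immediate and presents no obstacle.

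For part $(ii)$, I would specialize to $k_1=1$ and $k_2=k$. The first thing to confirm is that the $(n;k,1)$-binomial matrix $\B=\B(n;k,1)$ of Section \ref{sec:-4.2} coincides with the $(n;k)$-binomial matrix of Lemma \ref{lemma:1}: its columns are indexed by the singletons $\{j\}$, and $(\B)_{A\{j\}}=1$ precisely when $j\in A$, so the two matrices agree and $\L_{k_1}=\L_1=\L(G)$. Hence Lemma \ref{lemma:1} applies and gives $\B^{\top}\B={n-2\choose k-1}\I+{n-2\choose k-2}\J$. Substituting this into part $(i)$ produces
$$
\B^{\top}\L_k\B=\left({n-2\choose k-1}\I+{n-2\choose k-2}\J\right)\L_1={n-2\choose k-1}\L_1+{n-2\choose k-2}\J\L_1.
$$

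The only genuinely substantive (though still routine) point is to verify that $\J\L_1=\O$. This holds because $\L_1=\L(G)$ is a Laplacian matrix and so has all column sums equal to zero; each entry of $\J\L_1$ is precisely such a column sum. Consequently the second term vanishes and we obtain $\B^{\top}\L_k\B={n-2\choose k-1}\L_1$, recovering Theorem \ref{theo:main-result}. I do not anticipate any real difficulty: Theorem \ref{theo:general-result2} carries the combinatorial load, and the remaining work is the expansion of $\B^{\top}\B$ via Lemma \ref{lemma:1} together with the elementary observation $\J\L_1=\O$.
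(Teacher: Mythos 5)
Your proposal is correct and follows essentially the same route as the paper: part $(i)$ by premultiplying the identity of Theorem \ref{theo:general-result2} by $\B^{\top}$, and part $(ii)$ by substituting Lemma \ref{lemma:1} and using $\J\L_1=\O$. Your extra remarks (that $\B(n;k,1)$ coincides with the $(n;k)$-binomial matrix, and that $\J\L_1=\O$ because the Laplacian has zero column sums) only make explicit what the paper leaves implicit.
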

\begin{proof}
$(i)$ Follows directly from \eqref{eq:general-result2} multiplying by $\B^{\top}$. The result in $(ii)$, which corresponds to Theorem \ref{theo:main-result}, is obtained from $(i)$ with $k_1=1$ and $k_2=k$ and Lemma  \ref{lemma:1}. Indeed,
\begin{align*}
 \B^{\top}\B \L_{1} =  \B^{\top} \L_{k}\B
 & \Longleftrightarrow  \left[{{n-2} \choose {k-1}}\I + {{n-2} \choose {k-2}} \J\right] \L_{1} =  \B^{\top}\L_{k}\B \\
  & \Longleftrightarrow {{n-2} \choose {k-1}} \L_{1}=\B^{\top} \L_{k}\B
\end{align*}
since $\J\L_{1}=\O$.
\end{proof}

Since $F_k(G)\cong F_{n-k}(G)$ assume, without loss of generality, that $1\le k_1\le k_2 \le \frac{n}{2}$. Then, we have the generalization of Corollary \ref{coro:Lk-L1}.
\begin{corollary}
\label{coro2}
For any integers $h,k$ such that $1\le h\le k \le \frac{n}{2}$, let $\B$ be the $(n;k,h)$-binomial matrix. Then, the eigenvalues and eigenvectors of the Laplacian matrices of the token graphs $F_h$ and $F_k$ are related in the following way.
\begin{itemize}
\item[$(i)$]
If $\v$ is a $\lambda$-eigenvector of $\L_{h}$, then $\B \v$ is a  $\lambda$-eigenvector of $\L_{k}$. Moreover, the linear independence of the different eigenvectors is preserved. (That is, the spectrum of $\L_h$ is contained in the spectrum of $\L_k$.)
\item[$(ii)$]
If $\w$  is a $\lambda$-eigenvector of $\L_{k}$ and $\B^{\top}\w\neq \vec0$, then  $\B^{\top}\w$  is a  $\lambda$-eigenvector of $\L_{h}$. Moreover, all the eigenvalues, including multiplicities, of $\L_h$ are obtained (that is, one eigenvalue each time that the above non-zero condition is fulfilled).
\end{itemize}
\end{corollary}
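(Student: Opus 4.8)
The plan is to deduce both statements directly from the intertwining relation of Theorem \ref{theo:general-result2}. Setting $k_1=h$ and $k_2=k$ there (legitimate since $1\le h\le k\le \frac{n}{2}\le n$) and writing $\B=\B(n;k,h)$, we obtain the single identity $\B\L_{h}=\L_{k}\B$, from which everything will follow by elementary linear algebra. For part $(i)$, the eigenvector property is then immediate: if $\L_{h}\v=\lambda\v$, then $\L_{k}(\B\v)=\B(\L_{h}\v)=\lambda\B\v$, so $\B\v$ is a $\lambda$-eigenvector of $\L_{k}$ as soon as $\B\v\neq\vec0$. The entire content of $(i)$ therefore reduces to showing that $\B$ is injective, i.e. $\Ker(\B)=\{\vec0\}$: this guarantees simultaneously that $\B\v\neq\vec0$ and that $\B$ carries a linearly independent family of eigenvectors of $\L_{h}$ to a linearly independent family of eigenvectors of $\L_{k}$, which is exactly the containment of spectra \emph{with multiplicities}.

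The injectivity of $\B$ is the main obstacle. The matrix $\B=\B(n;k,h)$ is ${n\choose k}\times{n\choose h}$ with ${n\choose h}\le{n\choose k}$, and its transpose is the set-inclusion matrix $W_{h,k}(n)$. The hypothesis $h\le k\le\frac{n}{2}$ forces $h+k\le n$, and in precisely this range the inclusion matrix is known to have full rank $\min\{{n\choose h},{n\choose k}\}={n\choose h}$ (see Godsil \cite{g95}). Hence $\B$ has full column rank and $\Ker(\B)=\{\vec0\}$. A self-contained argument generalizing the $h=1$ computation of Theorem \ref{th:(1,k)} is conceivable, but it is considerably less transparent for general $h$, so I would invoke the classical full-rank result instead; this is the step I expect to require the most care.

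For part $(ii)$, I would transpose the basic identity. Since $\L_{h}$ and $\L_{k}$ are symmetric, $\B\L_{h}=\L_{k}\B$ yields $\L_{h}\B^{\top}=\B^{\top}\L_{k}$. Consequently, if $\L_{k}\w=\lambda\w$, then $\L_{h}(\B^{\top}\w)=\B^{\top}\L_{k}\w=\lambda\B^{\top}\w$, so $\B^{\top}\w$ is a $\lambda$-eigenvector of $\L_{h}$ whenever $\B^{\top}\w\neq\vec0$, as claimed.

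Finally, the ``moreover'' assertion of $(ii)$ — that every eigenvalue of $\L_{h}$ is recovered with its full multiplicity — follows by combining the two directions. By the injectivity of $\B$ from $(i)$, the matrix $\B^{\top}\B$ is positive definite, since $\v^{\top}\B^{\top}\B\v=\|\B\v\|^{2}>0$ for every $\v\neq\vec0$; in particular it is invertible. Moreover $\B^{\top}\B$ commutes with $\L_{h}$, because $\L_{h}(\B^{\top}\B)=(\B^{\top}\L_{k})\B=\B^{\top}(\B\L_{h})=(\B^{\top}\B)\L_{h}$, so it preserves each $\lambda$-eigenspace of $\L_{h}$ and acts on it as an invertible, hence bijective, map. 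Therefore, letting $\w=\B\v$ as $\v$ runs over a basis of the $\lambda$-eigenspace of $\L_{h}$, each such $\w$ is a $\lambda$-eigenvector of $\L_{k}$ with $\B^{\top}\w=\B^{\top}\B\v\neq\vec0$, and the vectors $\B^{\top}\B\v$ span the entire $\lambda$-eigenspace of $\L_{h}$. This shows that the non-zero condition in $(ii)$ is met and that all eigenvalues of $\L_{h}$, counted with multiplicity, are obtained, completing the plan.
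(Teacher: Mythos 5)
Your proposal is correct, and part $(i)$ coincides with the paper's own argument: both rest on the intertwining $\B\L_h=\L_k\B$ from Theorem \ref{theo:general-result2} together with the classical fact that the inclusion matrix has full column rank ${n\choose h}$ when $h+k\le n$ (the paper cites de Caen \cite{dc01} rather than Godsil, but the fact is the same). For part $(ii)$ you take a genuinely cleaner route. The paper re-applies Theorem \ref{theo:general-result2} with the pair $(k,\,n-h)$, identifies $\B(n;n-h,k)$ with $\B(n;k,h)^{\top}$, and then appeals to the isomorphism $F_{n-h}\cong F_h$ --- an identification that is stated rather loosely, since it implicitly involves complementation of the index sets. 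You instead simply transpose $\B\L_h=\L_k\B$ and use the symmetry of the Laplacians to get $\L_h\B^{\top}=\B^{\top}\L_k$ directly, which yields the eigenvector statement with no detour through $F_{n-h}$. Your treatment of the ``moreover'' clause is also more complete than the paper's: where the paper merely asserts that all eigenvectors are obtained because $\B^{\top}$ has full rank (and even misstates that rank as $h$), you observe that $\B^{\top}\B$ is positive definite, commutes with $\L_h$, and hence acts bijectively on each eigenspace, so that the images $\B^{\top}(\B\v)=\B^{\top}\B\v$ of a basis of the $\lambda$-eigenspace of $\L_h$ again form a basis and the non-vanishing condition is automatically met. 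This is a worthwhile sharpening; the only cost is that you must already have injectivity of $\B$ in hand, which you do from part $(i)$.
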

\begin{proof}
$(i)$ Let $\U$ be now the ${n \choose h}\times {n \choose h}$ matrix whose columns are all the normalized eigenvectors of $\L_h$. Then, Theorem \ref{theo:general-result2} with $k_1=h$ and $k_2=k$ yields
\begin{equation}
\label{k-vs-k}
\L_k\B\U=\B\L_h\U=\B\lambda \U= \lambda \B\U,
\end{equation}
and the result follows since $\rank (\B(n;k,h))=\min\left\{{n\choose k},{n\choose h}\right\}={n\choose h}$ (see de Caen \cite{dc01}) implies that $\Ker (\B)=\{\vec0\}$ by the Rank-Nullity theorem.\\
$(ii)$ Let $\W=\U $ be the same matrix as in $(i)$. Since, in fact, \eqref{k-vs-k} holds for any $k\le n-1$, let us apply it by putting  $n-h$ instead of $k$, and $k$  instead of $h$ (so that now $k<n-h$). Then, we get
$$
\L_{n-h}\B^{\top}\W =\B^{\top}\L_k\W=\lambda \B^{\top}\W,
$$
since  $\B(n;n-h,k)=\B(n;k,h)^{\top}=\B^{\top}$, and the matrices $\L_h$ and $\L_{n-h}$ have the same spectrum (recall that $F_{n-h}\cong F_h$).
Finally, all the eigenvectors of $\L_h$ are obtained because $\rank (\B^{\top})=h$.
\end{proof}

In our context, Theorem \ref{theo:general-result2} allows us to obtain the Laplacian matrix of $F_h$ in terms of the Laplacian matrix of $F_k$, provided that we know the binomial matrix $\B(n;k,h)$  with its rows and columns in the right order (that is, the same order as the columns of $\L_h$ and $\L_k$, respectively). Indeed, in this case,  \eqref{eq:Lk1-Lk2} with $k_1=h$ and $k_2=k$ leads to
\begin{align}
\L_h &=(\B^{\top}\B)^{-1}\B^{\top}\L_k \B.
\end{align}
Notice that $\B^{\top}\B$ is a Gram matrix of the columns of $\B$, which are linearly independent vectors and, hence, $\B^{\top}\B$ has inverse matrix.

Following with the simplified notation $k_1=h$ and $k_2=k$, the result of Theorem \ref{theo:general-result2} can also be written in terms of the adjacency matrices $\A_h$ and $\A_k$ of $F_h$ and $F_k$, respectively. Then, we get
\begin{equation}
\label{AkAh}
\A_k\B-\B\A_h=\D_k\B-\B\D_h,
\end{equation}
where $\D_h$ and $\D_k$ are the diagonal matrices with entries the degrees of the vertices of $F_h$ and $F_k$, respectively.
Some consequences of this are obtained when both $F_h$ and $F_k$ are regular. In Carballosa, Fabila-Monroy, Lea\~nos, and Rivera \cite{cflr17}, it is was proved that this regularity condition holds in the following cases:  $G=K_n$ or  $G=\overline{K_n}$ and $2\le k\leq n-2$; $G=K_{1,n-1}$ (the $(n-1)$-pointed star with $n$ even) or $G=\overline{K_{1,n-1}}$, and $k=n/2$.
\begin{corollary}
Assume that a graph $G\equiv F_1$ and its $k$-token graph $F_k$ are $d_1$-regular and $d_k$-regular graphs, respectively. Let $\B$ be the $(n;k,1)$-binomial matrix. Let $\A$ and $\A_k$ be the respective adjacency matrices of $G$ and $F_k$. If $\v$ is a $\lambda$-eigenvector of $\A$, then $\B \v$ is a $\mu$-eigenvector of $\A_k$, where $\mu=(d_k-d_1+\lambda)$.
\end{corollary}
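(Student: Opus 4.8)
The plan is to obtain the statement as a direct specialization of the intertwining relation \eqref{AkAh} to the regular case. Taking $h=1$ in \eqref{AkAh} and writing $\A:=\A_1=\A(G)$, together with the degree matrices $\D_1$ of $G$ and $\D_k$ of $F_k$, that identity reads
\[
\A_k\B-\B\A=\D_k\B-\B\D_1.
\]
First I would invoke the regularity hypotheses: since $G$ is $d_1$-regular and $F_k$ is $d_k$-regular, the degree matrices collapse to scalar multiples of the identity, $\D_1=d_1\I$ (of order $n$) and $\D_k=d_k\I$ (of order ${n\choose k}$). Substituting these and using $\B\D_1=d_1\B$ and $\D_k\B=d_k\B$, the right-hand side becomes $(d_k-d_1)\B$, which yields the clean commutation relation
\[
\A_k\B=\B\A+(d_k-d_1)\B.
\]

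Next I would simply evaluate this identity on the given $\lambda$-eigenvector $\v$ of $\A$. Since $\A\v=\lambda\v$, we obtain
\[
\A_k(\B\v)=\B(\A\v)+(d_k-d_1)\B\v=\lambda\B\v+(d_k-d_1)\B\v=(d_k-d_1+\lambda)\B\v,
\]
so $\B\v$ satisfies the eigenvalue equation for $\A_k$ with $\mu=d_k-d_1+\lambda$, exactly as asserted.

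The only step requiring genuine care—and the single place where the argument could fail—is to confirm that $\B\v\neq\vec0$, so that $\B\v$ is an honest eigenvector rather than the trivial vector. Here I would use the injectivity of the $(n;k,1)$-binomial matrix $\B$: its kernel is trivial, as established in the proof of Theorem \ref{th:(1,k)} (equivalently $\rank(\B)=n$). Since $\v\neq\vec0$, injectivity forces $\B\v\neq\vec0$, completing the argument. I do not anticipate any serious obstacle: once the regularity assumptions reduce $\D_1$ and $\D_k$ to scalars, the whole corollary is essentially a one-line consequence of Theorem \ref{theo:general-result2}, with the nonvanishing of $\B\v$ being the only nonautomatic ingredient.
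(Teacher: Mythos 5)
Your argument is correct and coincides with the first of the two proofs the paper gives: specialize the identity \eqref{AkAh} with $\D_k=d_k\I$ and $\D_1=d_1\I$ and apply it to $\v$. Your explicit check that $\B\v\neq\vec0$ via the injectivity of $\B$ is a small but welcome addition that the paper leaves implicit.
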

\begin{proof}
	Use \eqref{AkAh} with $\D_k=d_k\I$ and $\D_h=\D=d_1\I$, or consider the following implications:
	\begin{align*}
	\mbox{$\v$ is a $\lambda$-eigenvector of $\A$} & \Rightarrow 	\mbox{$\v$ is a $(d_1-\lambda)$-eigenvector of $\L$} \Rightarrow \\
	\mbox{$\B \v$ is a $(d_1-\lambda)$-eigenvector of $\L_k$} & \Rightarrow \mbox{$\B \v$ is a $[d_k-(d_1-\lambda)]$-eigenvector of $\A_k$}.
	\end{align*}
\end{proof}

\section{The Laplacian spectra of two infinite families of graphs}
\label{sec:-5}

In this section, we describe two infinite families of graphs in which their token graphs have the Laplacian spectrum fully characterized. In fact, one of these families and part of the other are the two mentioned infinite families of graphs that give rise to regular token graphs. Namely, the complete graphs $K_n$, for every $n$ and $k$; and the star graphs $S_n=K_{1,n-1}$, for $n$ even and $k=n/2$.

Before proceeding, we recall a useful graph construction and some of its properties.
Let $G=(V,E)$ be a graph of order $n$.
Its {\em double graph} $2G\equiv \widetilde{G}=(\widetilde{V},\widetilde{E})$
is the graph with the duplicated vertex set $\widetilde{V}=V\cup V'$,
and adjacencies induced from the adjacencies in $G$ as follows:
\begin{equation}
\label{def.doble.bipartit}
u\, \stackrel{\scriptscriptstyle{(E)}}{\sim} \, v\quad \Rightarrow\quad
u\, \stackrel{\scriptscriptstyle{(\widetilde{E})}}{\sim}\, v'
\quad \textrm{and}\quad
v\, \stackrel{\scriptscriptstyle{(\widetilde{E})}}{\sim}\, u'.
\end{equation}
Thus, the edge set of $\widetilde{G}$ is $\widetilde{E}=\{uv':uv\in
E \}$.
Observe that $2G$ is, in fact, the cross product of $G$ with $K_2$.
From the definition, it follows that $\widetilde{G}$ is a bipartite
graph with stable subsets $V$ and $V'$.
Moreover, if $G$ is a $\delta$-regular graph, then $\widetilde{G}$ also is.
For example,  Figure~\ref{fig.C5} shows the cycle $C_5$ and its double graph $C_{10}$.

\begin{figure}[t]
\centering
\includegraphics[scale=0.5]{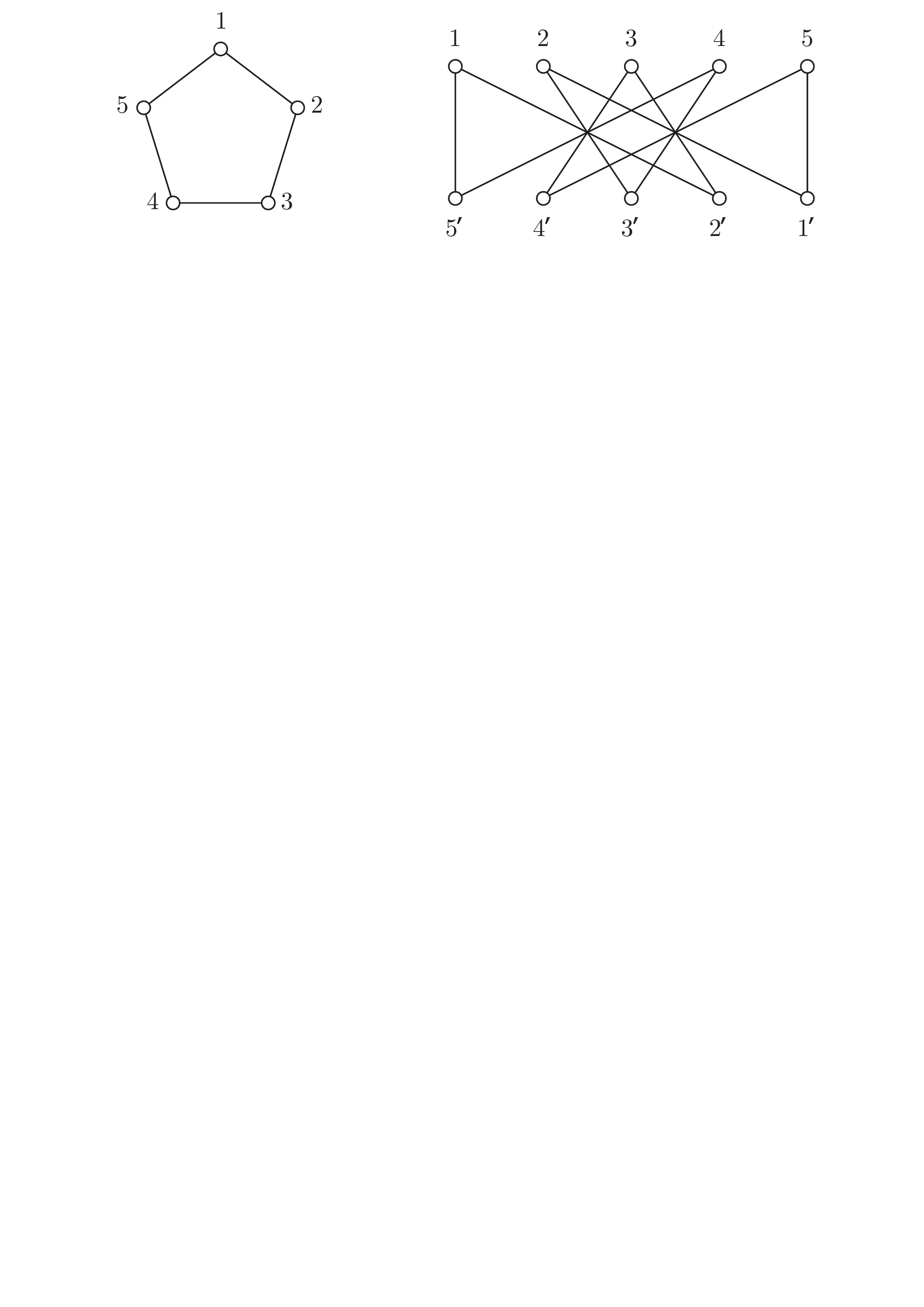}
\vskip-12cm
\caption{$C_{5}$ and its double graph $C_{10}$.}
\label{fig.C5}
\end{figure}

Concerning the spectral properties of the adjacency matrix, it is known that the eigenvalues and eigenvectors of the  double
graph $\widetilde{G}$ are closely related to the eigenvalues and eigenvectors of $G$ (see, for instance, Dalf\'o, Fiol, and Mitjana \cite{dfm15}). From now on, the eigenvalues of the adjacency matrix of a graph will be denoted by $\mu$'s (to clearly distinguish them from  the Laplacian eigenvalues, here denoted by $\lambda$'s).

\begin{proposition}
\label{prop.dc.espectrum}
Let $G$ be a graph on $n$ vertices.
\begin{itemize}
\item[$(i)$]
Let $G$ have a $\mu$-eigenvector $\x$.
Let us consider the vectors $\x^+$ with components
$x_i^+=x_{i'}^+=x_{i}$, and $\x^-$, with components
$x_i^-=x_i$ and $x_{i'}^-=-x_{i}$, for $1 \leq i,i' \leq n$.
Then, $\x^+$ is a $\mu$-eigenvector of $\widetilde{G}$ and $\x^-$ is a $(-\mu)$-eigenvector of $\widetilde{G}$.
\item[$(ii)$]
Let $G$ have spectrum
$\spec G=\{\mu_0^{m_0},\mu_1^{m_1}, \dots, \mu_d^{m_d} \}$,
where the superscripts denote multiplicities. Then, the spectrum of
$\widetilde{G}$ is
\begin{equation}
\spec\widetilde{G} = \{\pm \mu_0^{m_0},\pm \mu_1^{m_1},
\ldots, \pm\mu_d^{m_d} \}.
\end{equation}
\end{itemize}
\end{proposition}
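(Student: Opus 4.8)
The plan is to exploit the block structure that the doubling construction induces on the adjacency matrix of $\widetilde{G}$. Ordering the vertices of $\widetilde{G}$ as $V$ followed by $V'$, the defining rule $u\sim v\Rightarrow u\sim v'$ and $v\sim u'$ forces every edge of $\widetilde{G}$ to run between $V$ and $V'$, with $v_i$ adjacent to $v_j'$ precisely when $v_i\sim v_j$ in $G$. Hence the adjacency matrix of $\widetilde{G}$ is
$$
\A(\widetilde{G})=\begin{pmatrix} \O & \A \\ \A & \O \end{pmatrix},
$$
where $\A=\A(G)$ and we use that $\A^{\top}=\A$. Once this block form is in hand, both parts follow by direct computation.

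For part $(i)$, I would apply $\A(\widetilde{G})$ to the stacked vectors $\x^+$ and $\x^-$, where $\x^+$ has both halves equal to $\x$, and $\x^-$ has top half $\x$ and bottom half $-\x$. Using $\A\x=\mu\x$, a single block multiplication shows that $\A(\widetilde{G})\x^+$ has both halves equal to $\A\x=\mu\x$, so $\A(\widetilde{G})\x^+=\mu\x^+$; and that $\A(\widetilde{G})\x^-$ has top half $-\mu\x$ and bottom half $\mu\x$, so $\A(\widetilde{G})\x^-=-\mu\x^-$. This is exactly the assertion that $\x^+$ is a $\mu$-eigenvector and $\x^-$ is a $(-\mu)$-eigenvector.

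For part $(ii)$, I would begin with an orthonormal eigenbasis $\x_1,\ldots,\x_n$ of the symmetric matrix $\A$, say $\A\x_\ell=\mu_\ell\x_\ell$, and form the $2n$ vectors $\x_1^+,\ldots,\x_n^+,\x_1^-,\ldots,\x_n^-$. A short inner-product computation gives $\langle\x_i^+,\x_j^-\rangle=\langle\x_i,\x_j\rangle-\langle\x_i,\x_j\rangle=0$ and $\langle\x_i^\pm,\x_j^\pm\rangle=2\langle\x_i,\x_j\rangle=2\delta_{ij}$, so these $2n$ vectors are mutually orthogonal and nonzero. They therefore constitute a complete eigenbasis of the $2n\times 2n$ matrix $\A(\widetilde{G})$, and by part $(i)$ each eigenvalue $\mu_\ell$ of $G$ contributes the pair $\mu_\ell,-\mu_\ell$ to the spectrum of $\widetilde{G}$ with matching multiplicities, which yields $\spec\widetilde{G}=\{\pm\mu_0^{m_0},\ldots,\pm\mu_d^{m_d}\}$.

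The computation is entirely routine, so there is no genuine obstacle; the only point deserving care is the completeness claim in part $(ii)$, namely that the $2n$ vectors $\x_\ell^\pm$ are linearly independent so that no eigenvalue or multiplicity is lost. In particular, when $\mu_\ell=0$ the vectors $\x_\ell^+$ and $\x_\ell^-$ are still distinct and both lie in the kernel, so a $0$-eigenvalue of multiplicity $m$ in $G$ doubles to multiplicity $2m$ in $\widetilde{G}$, consistent with the stated list.
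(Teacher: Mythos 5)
Your proof is correct. Note that the paper does not actually prove Proposition \ref{prop.dc.espectrum}; it states the result and refers the reader to Dalf\'o, Fiol, and Mitjana \cite{dfm15}, so there is no internal argument to compare against. Your route is the standard one for this fact: since $\widetilde{G}=G\times K_2$, ordering the vertices as $V$ followed by $V'$ gives the block form
\begin{equation*}
\A(\widetilde{G})=\left(\begin{array}{cc}\O & \A\\ \A & \O\end{array}\right),
\end{equation*}
from which part $(i)$ is an immediate block multiplication, and part $(ii)$ follows because the $2n$ vectors $\x_\ell^{\pm}$ built from an orthonormal eigenbasis of $\A$ are mutually orthogonal and hence form a complete eigenbasis of $\A(\widetilde{G})$. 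You were right to single out the completeness step as the only point needing care, and your remark about the doubling of the multiplicity of a zero eigenvalue is consistent with how the proposition's spectrum list is to be read.
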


Given two integers $n,k$ such that $k\in [n]$, the {\em Johnson graph} $J(n,k)$ is defined in the same way as the $k$-token graph of the complete graph $K_n$. Then, $F_k(K_n)\cong J(n,k)$. It is known that these graphs are antipodal (but not bipartite) distance-regular graphs, with degree $d=k(n-k)$, diameter $D=\min\{k,n-k\}$, and spectrum of the adjacency matrix $\A$
(eigenvalues and multiplicities)
$$
\mu_j=(k-j)(n-k-j)-j\qquad \mbox{and} \qquad m_j={n\choose j}-{n\choose j-1},\qquad j=0,1,\ldots,D.
$$
See, for instance, Brouwer, Cohen, and Neumaier \cite[Th. 9.1.2]{bcn89}. Then, since the Laplacian matrix of $J(n,k)$ is $\L_k=d\I-\A$, we infer that the Laplacian spectrum of $F_k(K_n)$ is
\begin{equation}
\lambda_j=d-\mu_j=j(n+1-j)\qquad \mbox{and} \qquad m_j={n\choose j}-{n\choose j-1},\qquad j=0,1,\ldots,D.
\end{equation}
For example, $F_7(K_{14})\cong J(14,7)$ is a $49$-regular graph with $n=3432$ vertices, diameter $D=7$, and Laplacian spectrum
$$
\spec F_7(K_{14})=\{0^1, 14^{13}, 26^{77}, 36^{273}, 44^{637}, 50^{1001}, 54^{1001}, 56^{429}\}.
$$

Given an integer $k$, the {\em odd graph} $O_k$ has vertices corresponding to the ${2k-1\choose k}$ $k$-subsets of the set $[2k-1]$, and two vertices are adjacent if the corresponding $k$-subsets are disjoint.
The adjacency eigenvalues and multiplicities of $O_k$ are
\begin{equation}
\label{sp-Ok}
\mu_j=(-1)^j(k-j)\qquad \mbox{and} \qquad m_j={2k-1\choose j}-{2k-1\choose j-1},\qquad j=0,1,\ldots,k-1,
\end{equation}
(see, for instance, Biggs \cite{biggs}[Chaps. 20-21]).
Now, consider the {\em double odd graph $2O_k$} with stable sets $V_0=\{(u,0):u\in V(O_k)\}$ and $V_1=\{(v,1): v\in V(O_k)\}$,
and $(u,0)\sim (v,1)$ when $u\sim v$ in $O_k$. Alternatively, since ${2k-1\choose k-1}={2k-1\choose k}$, the graph $2O_k$ can be defined by saying that its vertices are the $(k-1)$-subsets and $k$-subsets of the set $[2k-1]$, where distinct vertices $\gamma$ and $\delta$ are adjacent  if and only if either $\gamma \subset \delta$ or $\delta \subset \gamma$.
The double odd graph $2O_k$ is an antipodal (bipartite) distance-regular graph with degree $d=k$, order $2{2k-1\choose k-1}$, diameter $D=2k-1$, and adjacency spectrum
$$
\mu_{\pm j}=\pm(k-j)\qquad \mbox{and} \qquad m_{\pm j}={n\choose j}-{n\choose j-1},\qquad j=0,1,\ldots,k-1.
$$
This is a consequence of \eqref{sp-Ok} and Proposition \ref{prop.dc.espectrum}$(i)$ (see again \cite[pp. 414]{bcn89}).
Moreover, we claim that, when $n=2k$,  the $k$-token graph of the star $S_n$ is isomorphic to the double odd graph $2O_{k}$. Indeed, note first that the number of vertices of both graphs coincides since ${2k\choose k}={2k-1\choose k-1}+{2k-1\choose k}$. Moreover, with vertices of $S_{2k}$ labeled $0$ (the center) and $1,\ldots,2k-1$ (the spokes), the mapping that sends every vertex of $F_k(S_{2k})$ corresponding to a $k$-subset $\gamma\ni 0$ to the vertex $(0,\gamma\setminus{0})$ of $2O_{k}$, and every vertex of $F_k(S_{2k})$ corresponding to a $k$-subset $\gamma\not\ni 0$ to the vertex $(1,\gamma)$ of $2O_{k}$ is an isomorphism between $F_{k}(S_{2k})$ and $2O_k$.
As a consequence, we have that the $(n/2)$-token of the star graph $S_n$ has Laplacian spectrum
$$
\lambda_{j}=j,\quad \lambda_{-j}=2k-j,\qquad \mbox{and} \qquad m_{\pm j}={n\choose j}-{n\choose j-1},\qquad j=0,1,\ldots,k-1.
$$

The double odd graphs can be generalized as follows.
The {\em doubled Johnson graph} $J(n;k,k+1)$ is a bipartite graph with vertex set $V={[n]\choose k}\cup {[n]\choose k+1}$ and adjacencies $u\sim v$ if and only if either $u\subset v$ or $v\subset u$. In particular, note that if $n=2k-1$, then $J(2k-1;k-1,k)$ is  the double odd graph $2O_{k}$.
For other values of $n$, the doubled Johnson graph is not regular. More precisely, the degree of each vertex in ${[n]\choose k}$ is $n-k$, whereas the degree of each vertex in ${[n]\choose k+1}$ is $k+1$. Thus, the doubled Johnson graph $J(n;k,k+1)$ has
${n\choose k}+{n\choose k+1}={n+1\choose k+1}$ vertices and $(n-k){n\choose k}=(k+1){n\choose k+1}$ edges, and, as expected, it satisfies the symmetric property $J(n;k,k+1)\cong J(n;n-k-1,n-k)$.

Since the {\em doubled Johnson graphs} are  distance biregular bipartite graphs (see Delorme \cite{d94}), their adjacency and Laplacian spectra can be computed from the respective quotient matrices (see Dalf\'o and Fiol \cite{df20}).
For instance, the Laplacian eigenvalues of $J(n;k,k+1)$ are the following:
\begin{itemize}
\item
If $n$ is even and $k\le n/2-1$ (the case $k=n/2$ corresponds to the already mentioned double odd graphs),
$$
\lambda_j=j\quad  \mbox{for } j=0,\ldots,k-1,\qquad \mbox{and}\qquad \lambda_{k+j}=n-k+j\quad \mbox{for } j=1,\ldots,k;
$$
\item
If $n$ is odd,  and $k\le (n-1)/2$,
$$
\lambda_j=j\quad \mbox{for } j=0,\ldots,k,\qquad \mbox{and}\qquad \lambda_{k+j}=n-k+j\quad \mbox{for } j=1,\ldots,k.
$$
\end{itemize}

By using the same mapping that shows the isomorphism between $F_k(S_{2k})$  and $2O_{k}\cong J(2k-1;k-1,k)$, it is readily seen
that  $F_{k}(S_{n-1})\cong J(n;k-1,k)$.

\section{A graph, its complement, and their token graphs}
\label{sec:-6}

Let us consider a graph $G$ and its complement $\overline{G}$, with respective Laplacian matrices $\L$ and $\overline{\L}$.
We already know that the eigenvalues of $G$ are closely related to the eigenvalues of $\overline{G}$, since $\L+\overline{\L}=n\I-\J$.
Hence, the same relationship holds for the algebraic connectivity, see Fiedler \cite{fi73}.

Observe that the $k$-token graph of $\overline{G}$ is the complement of the $k$-token graph of $G$ with respect to the Johnson graph $J(n,k)$ (the $k$-token graph of $K_n$), see Carballosa, Fabila-Monroy, Lea\~nos, and Rivera \cite[Prop. 3]{cflr17}.
Then, it is natural to ask whether a similar relationship holds between the Laplacian spectrum of the $k$-token graph of $G$ and the Laplacian spectrum of the $k$-token graph of $\overline{G}=K_n-G$.
In this section, we prove that, indeed, this is the case.


Our result is a consequence of the following property.

\begin{lemma}
\label{lem:com}
Given a graph $G$ and its complement $\overline{G}$, the Laplacian matrices $\L=\L(F_k(G))$ and $\overline{\L}=\L(F_k(\overline{G}))$ of their $k$-token graphs commute:
$
\L\overline{\L}=\overline{\L}\L.
$
\end{lemma}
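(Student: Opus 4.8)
The plan is to establish commutativity by expressing both Laplacian matrices in terms of a common set of operators and then checking that these commute. First I would recall the key structural fact, already observed in the excerpt: since $F_k(\overline{G})$ is the complement of $F_k(G)$ inside the Johnson graph $J(n,k) = F_k(K_n)$, the adjacency matrices satisfy $\A(F_k(G)) + \A(F_k(\overline{G})) = \A(J(n,k))$. Writing $\L = \D - \A$ and $\overline{\L} = \overline{\D} - \overline{\A}$, and denoting the Laplacian of the Johnson graph by $\L_J = \D_J - \A_J$ with $\D_J = k(n-k)\I$ (since $J(n,k)$ is regular of degree $k(n-k)$), I would use the relation $\A + \overline{\A} = \A_J$ together with the fact that the degrees add: $\deg_{F_k(G)}(A) + \deg_{F_k(\overline{G})}(A) = k(n-k)$ for every vertex $A$. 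This gives $\D + \overline{\D} = \D_J = k(n-k)\I$, and hence
\begin{equation}
\L + \overline{\L} = (\D+\overline{\D}) - (\A+\overline{\A}) = k(n-k)\I - \A_J = \L_J.
\end{equation}
This is the token-graph analogue of the identity $\L + \overline{\L} = n\I - \J$ quoted in the section opening.

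Given $\L + \overline{\L} = \L_J$, commutativity $\L\overline{\L} = \overline{\L}\L$ is equivalent to $\L\,\L_J = \L_J\,\L$, because $\overline{\L} = \L_J - \L$ and $\L$ trivially commutes with itself. So the problem reduces to showing that $\L$ commutes with the Laplacian $\L_J$ of the Johnson graph. The natural way to see this is to invoke Theorem~\ref{theo:general-result2} (or its corollaries): the Laplacian $\L_J = \L(F_k(K_n))$ is, up to the scalar degree shift, built from the binomial/set-inclusion matrices, and the whole point of that machinery is that it describes how the token structure at level $k$ intertwines with the underlying vertex set $[n]$. Concretely, I would argue that $\L_J$ can be written as a polynomial in (or a combination of) the operators $\B^\top\B$ and related inclusion operators that act on the $k$-subset space in a way determined purely by the \emph{set system} $\binom{[n]}{k}$, independently of which edges of $K_n$ one selects. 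Since $\L$ for a general $G$ is assembled from the same token-adjacency incidences restricted to $E(G)$, the claim is that $\L_J$ acts as a scalar on each isotypic component under the symmetry governing the construction, forcing $[\L,\L_J]=0$.

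The cleanest route, and the one I expect to be the main obstacle to make rigorous, is to show directly that $\L$ and $\L_J$ commute rather than leaning on symmetry heuristics. Here I would compute the commutator entrywise, or better, observe that $\L_J = k(n-k)\I - \A_J$ and that $\A_J$ commutes with $\A = \A(F_k(G))$. The distance-regularity of $J(n,k)$ means $\A_J$ is a polynomial in the distance matrices of the Johnson scheme; the token adjacency $\A$ preserves the Johnson-scheme relations (two $k$-sets adjacent in $F_k(G)$ are at Johnson-distance $1$), so $\A\A_J$ and $\A_J\A$ both count, for a fixed pair $(A,C)$ of $k$-sets, the weighted walks of length two passing through an intermediate $k$-set $B$, one step a $G$-token-move and one step an arbitrary Johnson step; the symmetric difference bookkeeping shows these two counts agree. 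The delicate point is verifying this equality of two-step counts case by case according to the Johnson-distance between $A$ and $C$ (distances $0,1,2$), which is where the genuine combinatorial work lies. Once $\A\A_J = \A_J\A$ is confirmed, commutativity of $\L$ with $\L_J$ is immediate, and therefore so is $\L\overline{\L} = \overline{\L}\L$, completing the proof.
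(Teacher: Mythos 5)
Your reduction is sound as far as it goes: $\L+\overline{\L}=\L_J$ does hold (degrees and adjacencies both add up to those of $J(n,k)$), so $\L\overline{\L}=\overline{\L}\L$ is indeed equivalent to $\L\L_J=\L_J\L$. This differs from the paper, which never passes through the Johnson graph in this lemma and instead verifies $(\L\overline{\L})_{AB}=(\overline{\L}\L)_{AB}$ directly by counting two-step walks, one step in $G$ and one in $\overline{G}$, according to the size of $A\cap B$.

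However, your ``cleanest route'' rests on a false claim: the adjacency matrices $\A=\A(F_k(G))$ and $\A_J=\A(J(n,k))$ do \emph{not} commute in general. Already for $k=1$ and $G=P_3$ one has $\A_J=\J-\I$ and $\A\J\neq\J\A$ because the rows of $\A\J$ carry the (non-constant) degrees of $G$. What is true is $[\L,\L_J]=[\A,\A_J]-[\D,\A_J]$, and both commutators on the right are individually nonzero in general; they cancel only jointly. So even if you carried out the two-step walk count for $\A\A_J$ versus $\A_J\A$, it would not close the argument — you have dropped the diagonal (degree) contributions, which are exactly the terms the paper's proof treats in its cases $(iii)$, $(iv)$, and $(iii')$ (the ``keep a token in place'' steps weighted by vertex degrees). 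Your alternative route via isotypic components is flagged by yourself as heuristic and is also incomplete as stated: $\L$ is not $S_n$-equivariant, so one must separately prove that $\L$ preserves the eigenspaces of $\L_J$. (That can in fact be done rigorously from Theorem~\ref{theo:general-result2}: $\L_k\B(n;k,j)=\B(n;k,j)\L_j$ shows $\L_k$ preserves each column space $\mathrm{Im}\,\B(n;k,j)$, hence, being symmetric, also their orthogonal complements, and these nested spaces carve out the $\L_J$-eigenspaces. But this argument is not the one you gave.) As it stands, the proposal does not constitute a proof.
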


\begin{proof}
 We want to prove that $(\L \overline{\L})_{AB}=(\overline{\L}\L)_{AB}$ for every pair of vertices $A,B\in {[n]\choose k}$ of $F_k(G)$ and $F_k(\overline{G})$, respectively. To this end, we consider the different possible values of $|A\cap B|$.
First, note that $(\L \overline{\L})_{AB}=(\overline{\L}\L)_{AB}=0$ when $|A\cap B|<k-1$. Moreover, if $|A\cap B|=k$, that is $A=B$, we have $(\L \overline{\L})_{AA}=(\overline{\L} \L)_{AA}=\deg_{F_k(G)}(A)\cdot \deg_{F_k(\overline{G})}(A)$. Thus, we only need to consider the case $|A\cap B|=k-1$, where we can assume that $A=A'\cup \{a\}$ and $B=A'\cup \{b\}$, where $|A'|=k-1$ and $a,b\in[n]$ for $a\neq b$.
Moreover, without loss of generality, we can  assume that $a\sim b$ in $G$, which implies that $a\not\sim b$ in $\overline{G}$. (If not, interchange the roles of $A$ and $B$.)
Then, the different terms of the sum
$$
(\L \overline{\L})_{AB}= \sum_{X\in {[n]\choose k}} (\L)_{AX}(\overline{\L})_{XB}
$$
can be seen as walks of length two, where the first step is done in $G$ and the second step is done in $\overline{G}$,
yielding the following possibilities:
\begin{enumerate}
\item[$(i)$]
First `add $b$', and then `delete $a$':
$$
A=A'\cup \{a\} \quad \rightarrow \quad X=(A'\setminus \{c\})\cup\{b\} \quad \rightarrow \quad (X\setminus \{a\}) \cup \{c\}=B,
$$
where $c\in A'$, $c\sim b$ in $G$; and $c\sim a$ in $\overline{G}$. Thus, for each $c\in N_G(b)\cap N_{\overline{G}}(a)\cap A'$ we get a term $(-1)(-1)=1$.
\item[$(ii)$]
First `delete $a$', and then  `add $b$':
$$
A=A'\cup \{a\} \quad \rightarrow \quad X=(A\setminus \{a\})\cup\{c\}
\quad \rightarrow \quad (X\setminus \{c\}) \cup \{b\}=B,
$$
where $c\in (A\cup \overline{B}$, $c\sim a$ in $G$; and $c\sim b$ in $\overline{G}$. Thus, for each $c\in N_G(a)\cap N_{\overline{G}}(b)\cap \overline{A\cup B}$, we get a term $(-1)(-1)=1$.
\item[$(iii)$]
First `change $a$ by $b$', and then  `keep $b$':
$$
A=A'\cup \{a\} \quad \rightarrow \quad X=(A\setminus \{a\})\cup\{b\}=B \quad \rightarrow \quad B,
$$
where $a\sim b$ in $G$. This corresponds to the case when $X=B$ and $A\sim B$ in $F_k(G)$. Then, since the second step corresponds to the diagonal entry of $(\overline{\L})_{BB}$, this gives the term $(-1)\cdot\deg_{F_k(\overline{G})} B$.
\item[$(iv)$]
The other way around (first  `keep $a$', and then `change $a$ by $b$') corresponds to the case when $X=A$, but $A\not\sim B$ in $F_k(\overline{G})$. Then, this  gives zero since $a\not\sim b$ in $\overline{G}$.
\end{enumerate}
Summarizing, since all the other possibilities give a zero term, we conclude that, when $|A\cap B|=k-1$,
\begin{equation}
\label{LLC(AB)}
(\L\overline{\L})_{AB}=|N_G(b)\cap N_{\overline{G}}(a)\cap A'|+|N_G(a)\cap N_{\overline{G}}(b)\cap \overline{A\cup B}|-\deg_{F_k(\overline{G})} (B).
\end{equation}
With respect to  $(\overline{\L}\L)_{AB}$, note that, since the involved matrices are symmetric, we can compute
$(\overline{\L}\L)_{BA}=\sum_{X\in{[n]\choose k}}(\overline{\L})_{BX}(\L)_{XA}$. Reasoning as before, the walks $(i)$ and $(ii)$ (from $A$ to $B$) become the walks for $B$ to $A$ if we interchange these two vertices, the vertices $a$ and $b$, and the graphs $G$ and $\overline{G}$.
For instance, $(i)$ and $(iv)$ lead to:
\begin{itemize}
\item[$(i')$]
First `add $a$', and then  `delete $b$':
$$
B=A'\cup \{b\} \quad \rightarrow \quad X=(A'\setminus \{c\})\cup\{a\} \quad \rightarrow \quad (X\setminus \{b\}) \cup \{c\}=A,
$$
where $c\in A'$, $c\sim a$ in $\overline{G}$; and $c\sim b$ in $G$. Thus, for each $c\in N_G(b)\cap N_{\overline{G}}(a)\cap A'$, we get again a term $(-1)(-1)=1$.
\item[$(iii')$]
First  `keep $b$', and then  `change $b$ by $a$':
$$
B=A'\cup \{a\} \quad \rightarrow \quad X=B \quad \rightarrow \quad (B\setminus \{b\})\cup\{a\}=A,
$$
where $a\sim b$ in $G$. The first step corresponds again to the diagonal entry of  $(\overline{\L})_{BB}$, which gives the term $(-1)\cdot\deg_{F_k(\overline{G})} (B)$ again.
\end{itemize}
Consequently, considering all the above cases, we have that $(\L\overline{\L})_{AB}=(\overline{\L}\L)_{AB}$ for any $A$ and $B$, as claimed.
\end{proof}

\begin{theorem}
\label{theo:pairing}
Let $G=(V,E)$ be a graph on $n=|V|$ vertices, and let $\overline{G}$ be its complement. For a given $k$, with $1\leq k<n-1$, let us consider the token graphs $F_k(G)$ and  $F_k(\overline{G})$. Then, the Laplacian spectrum of $F_k(\overline{G})$ is the complement of the Laplacian spectrum of $F_k(G)$ with respect to the Laplacian spectrum of the Johnson graph $J(n,k)=F_k(K_n)$.
That is, every eigenvalue  $\lambda_J$ of $J(n,k)$ is the sum of one eigenvalue $\lambda_{F_k(G)}$ of $F_k(G)$ and one eigenvalue $\lambda_{F_k(\overline{G})}$ of $F_k(\overline{G})$, where each $\lambda_{F_k({G})}$ and each $\lambda_{F_k(\overline{G})}$ is used once:
$$
\label{spFk(G)-sp(Fk(noG))}
\lambda_{F_k({G})}+\lambda_{F_k(\overline{G})}=\lambda_J.
$$
\end{theorem}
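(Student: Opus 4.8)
The plan is to combine the commutativity established in Lemma~\ref{lem:com} with an additive identity relating the three relevant Laplacians. Writing $\L=\L(F_k(G))$, $\overline{\L}=\L(F_k(\overline{G}))$, and $\L_J:=\L(J(n,k))=\L(F_k(K_n))$, I would first prove the token-graph analogue of the classical relation $\L(G)+\L(\overline{G})=n\I-\J$, namely
$$
\L+\overline{\L}=\L_J.
$$
To verify this, I would use that $F_k(\overline{G})$ is the complement of $F_k(G)$ with respect to $J(n,k)$, so the adjacency matrices satisfy $\A(F_k(G))+\A(F_k(\overline{G}))=\A(J(n,k))$. For the degrees, every edge of $J(n,k)$ incident with a vertex $A$ belongs to exactly one of $F_k(G)$ and $F_k(\overline{G})$; hence $\deg_{F_k(G)}(A)+\deg_{F_k(\overline{G})}(A)=\deg_{J(n,k)}(A)=k(n-k)$, so the diagonal degree matrices add to that of $J(n,k)$. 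Since each Laplacian is (degree matrix) $-$ (adjacency matrix), subtracting the adjacency identity from the degree identity yields $\L+\overline{\L}=\L_J$.

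Next I would invoke Lemma~\ref{lem:com}: the symmetric matrices $\L$ and $\overline{\L}$ commute. Two commuting real symmetric matrices are simultaneously orthogonally diagonalizable, so there exists an orthonormal basis $\{\w_1,\ldots,\w_{{n\choose k}}\}$ of common eigenvectors, with $\L\,\w_i=\lambda_{F_k(G)}^{(i)}\w_i$ and $\overline{\L}\,\w_i=\lambda_{F_k(\overline{G})}^{(i)}\w_i$. Applying the additive identity to each $\w_i$ gives
$$
\L_J\,\w_i=(\L+\overline{\L})\w_i=\big(\lambda_{F_k(G)}^{(i)}+\lambda_{F_k(\overline{G})}^{(i)}\big)\w_i,
$$
so $\w_i$ is an eigenvector of $\L_J$ with eigenvalue $\lambda_J^{(i)}=\lambda_{F_k(G)}^{(i)}+\lambda_{F_k(\overline{G})}^{(i)}$. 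Because the $\w_i$ form an orthonormal basis of the whole space, this simultaneously diagonalizes all three Laplacians and produces exactly ${n\choose k}$ triples $(\lambda_{F_k(G)}^{(i)},\lambda_{F_k(\overline{G})}^{(i)},\lambda_J^{(i)})$ satisfying $\lambda_{F_k(G)}^{(i)}+\lambda_{F_k(\overline{G})}^{(i)}=\lambda_J^{(i)}$, which is precisely the claimed pairing.

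The main obstacle is really the commutativity, which is exactly what Lemma~\ref{lem:com} supplies; once commutativity and the identity $\L+\overline{\L}=\L_J$ are in hand, the simultaneous diagonalization does all the work. The only point requiring care is the bookkeeping of multiplicities: I would emphasize that using a \emph{full} orthonormal eigenbasis (rather than merely matching the two spectra set-theoretically) guarantees that each eigenvalue of $\L$ and of $\overline{\L}$ is matched exactly once with its multiplicity, which is what makes the statement that ``each $\lambda_{F_k(G)}$ and each $\lambda_{F_k(\overline{G})}$ is used once'' literally true.
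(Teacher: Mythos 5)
Your proposal is correct and follows essentially the same route as the paper: establish $\L+\overline{\L}=\L_J$, invoke Lemma~\ref{lem:com} to get commutativity, and use simultaneous diagonalization of the commuting symmetric Laplacians to pair the eigenvalues with multiplicity. Your explicit verification of the additive identity via the edge partition of $J(n,k)$ and the degree sums is a detail the paper leaves implicit, but the argument is the same.
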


\begin{proof}
Let $\L$ and $\overline{\L}$ be the Laplacian matrices of $F_k(G)$ and  $F_k(\overline{G})$, respectively. Let $\L_J$ be the Laplacian matrix of $J(n,k)$. Since $\L+\overline{\L}=\L_J$, Lemma \ref{lem:com} implies that $\L_J$ commute with both $\L$ and $\overline{\L}$. Indeed,
$\L_J\L=(\L+\overline{\L})\L=\L(\L+\overline{\L})=\L\L_J$. This implies that the matrices $\L$, $\overline{\L}$, and $\L_J$ diagonalize simultaneously, that is, the vertex space $V\cong \Re^n$ has a basis consisting of
common eigenvectors of all $\L$, $\overline{\L}$, and $\L_J$  (see, for instance, Brouwer and Haemers \cite[Prop. 2.1.1]{bh10}).
Then, for each common eigenvector $\v$,
$$
\L\v+\overline{\L}\v=\L_J\v\qquad \Rightarrow \qquad \lambda_{F_k({G})}+\lambda_{F_k(\overline{G})}=\lambda_J.
$$
Since there are $n$ independent (common) eigenvectors, the result follows.
\end{proof}

Let us show an example.
%
\begin{example}
Consider the graphs $G$, its 2-token graph $F_2(G)$, the complement $\overline{G}$ of $G$, and its 2-token graph  $F_2(\overline{G})$ in Figure \ref{fig:C_3-amb-aresta+convers+tokens}. Their Laplacian matrices are the following:

	\begin{align*}
	&\L(G)=
	\left(\begin{array}{rrrr}
		1&-1&0&0\\
		-1&3&-1&-1\\
		0&-1&2&-1\\
		0&-1&-1&2
	\end{array}\right), \qquad
	\L(F_2(G))=
	\left(\begin{array}{rrrrrr}
		2&-1&-1&0&0&0\\
		-1&3&-1&-1&0&0\\
		-1&-1&3&0&-1&0\\
		0&-1&0&3&-1&-1\\
		0&0&-1&-1&3&-1\\
		0&0&0&-1&-1&2
	\end{array}\right),\\
&\L(\overline{G})=
\left(\begin{array}{rrrr}
    2&0&-1&-1\\
	0&0&0&0\\
	-1&0&1&0\\
	-1&0&0&1
\end{array}\right), \qquad\ \
\L(F_2(\overline{G}))=
\left(\begin{array}{rrrrrr}
	2&0 &0 &-1&-1&0\\
	0&1 &0 & 0& 0&-1\\
	0&0 &1 & 0& 0&-1\\
   -1&0 &0 & 1& 0&0\\
   -1&0 &0 & 0& 1&0\\
	0&-1&-1& 0& 0&2
\end{array}\right).
\end{align*}
From Lemma \ref{lem:com}, one can check that $\L(F_2(G))$ and $\L(F_2(\overline{G}))$ commute. Of course,  the same holds for $\L(G)$ and $\L(\overline{G})$ since $\L(G)+\L(\overline{G})=4\I-\J$.
From this, the eigenvalues of $K_4$ are obtained by adding one of the eigenvalues of $G$ plus one of $\overline{G}$. Now, note that the same happens for the eigenvalues of the Johnson graph $J(4,2)=F_2(G)\cup F_2(\overline{G})$. Indeed, as shown in the figure, the eigenvalues of $J(4,2)$ can be obtained by adding one eigenvalue of  $F_2(G)$ to one eigenvalue of $F_2(\overline{G})$, in such a way that each eigenvalue of both graphs is used exactly once.
\end{example}

\begin{figure}[t]
	\begin{center}
		\includegraphics[width=\textwidth]{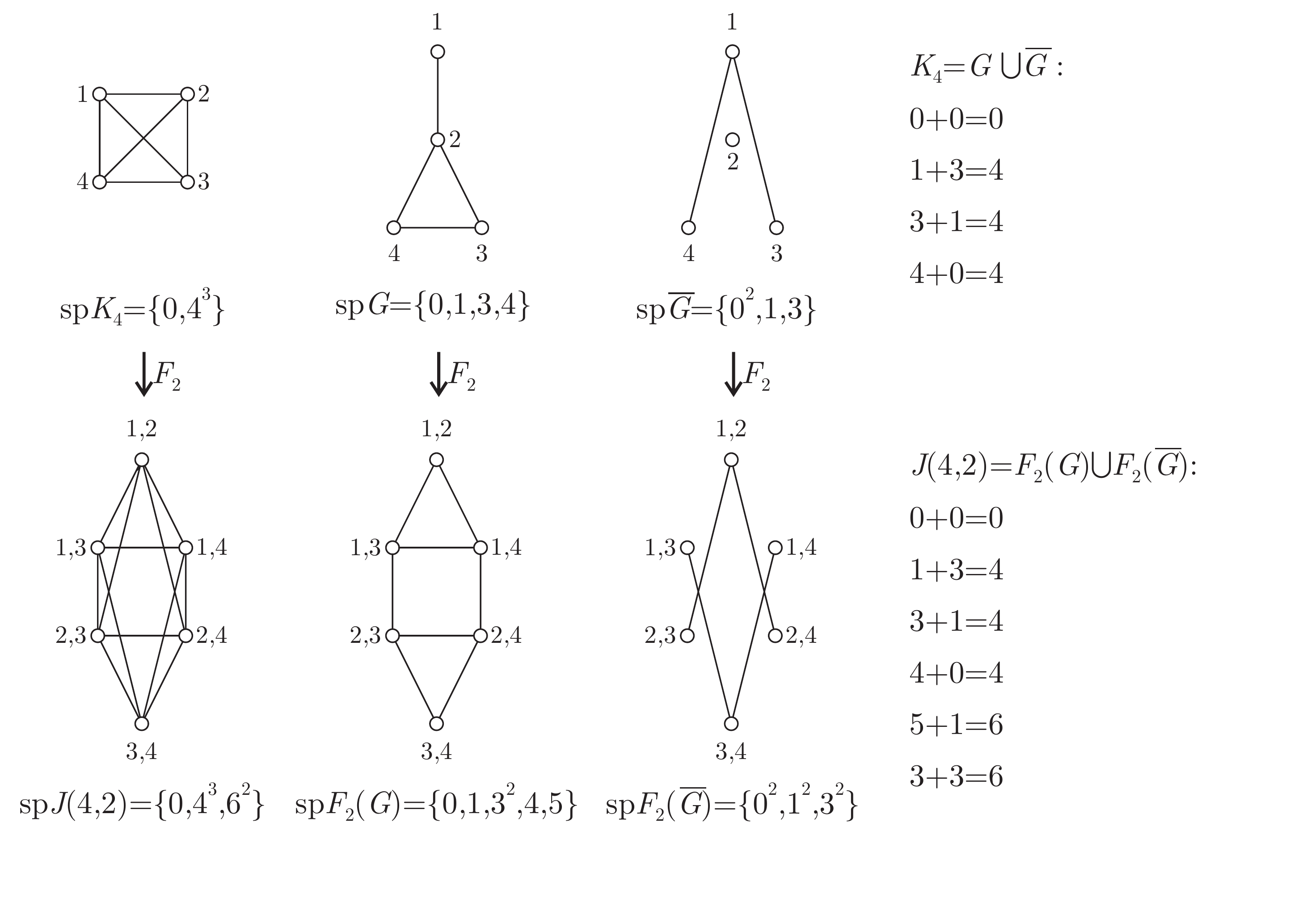}
	\end{center}
	\vskip-1.5cm
	\caption{The graphs $K_4$, $G$, its complement $\overline{G}$ and their 2-token graphs.}
	\label{fig:C_3-amb-aresta+convers+tokens}
\end{figure}

Since all the eigenvalues of the Johnson graphs are integers, Theorem \ref{theo:pairing} leads to the following consequence.
\begin{corollary}
\label{coro:comp}
Let $G$ be a graph such that its complement $\overline{G}$ has $c$ connected components. Then, for $1\leq k \leq n-1$, the $k$-token graph
	$F_k(G)$ has at least $c$ integer eigenvalues. If each of the $c$ components of $\overline{G}$ has at least $k$ vertices,
	then $F_k(G)$ has at least ${{c+k-1}\choose{k}}$ integer eigenvalues.
\end{corollary}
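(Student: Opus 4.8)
The plan is to derive both bounds from the pairing in Theorem \ref{theo:pairing} together with a count of the connected components of $F_k(\overline{G})$. The key observation is that all Laplacian eigenvalues of the Johnson graph $J(n,k)=F_k(K_n)$ are integers (indeed $\lambda_j=j(n+1-j)$), so whenever $0$ is a Laplacian eigenvalue of $F_k(\overline{G})$ with common eigenvector $\v$, we have $\L\v=\L_J\v-\overline{\L}\v=\L_J\v$, which exhibits an integer eigenvalue $\lambda_{F_k(G)}=\lambda_J$ of $F_k(G)$. Thus the number of integer eigenvalues of $F_k(G)$, counted with multiplicity, is at least the multiplicity of $0$ in the Laplacian spectrum of $F_k(\overline{G})$, i.e., at least the number of connected components of $F_k(\overline{G})$.

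For the first statement I would bound this multiplicity below using spectral containment. Since $\overline{G}$ has $c$ components, $0$ is a Laplacian eigenvalue of $\overline{G}$ with multiplicity $c$. By Theorem \ref{th:(1,k)}, the Laplacian spectrum of $\overline{G}$ is contained, with multiplicities, in that of $F_k(\overline{G})$, so $0$ has multiplicity at least $c$ in $F_k(\overline{G})$. Combined with the pairing above, $F_k(G)$ has at least $c$ integer eigenvalues.

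For the second statement I would compute the number of components of $F_k(\overline{G})$ exactly under the size hypothesis. Write $\overline{G}=C_1\cup\cdots\cup C_c$ with $|V(C_i)|=n_i\ge k$. A token sitting in $C_i$ can only ever move within $C_i$, so the vector $(k_1,\ldots,k_c)$ recording how many of the $k$ tokens lie in each component is invariant along every path in $F_k(\overline{G})$. Conversely, since each $C_i$ is connected and $n_i\ge k\ge k_i$, all configurations with a fixed distribution $(k_1,\ldots,k_c)$ are mutually reachable (here one invokes the known fact that $F_j(H)$ is connected whenever $H$ is connected, applied to each component). Hence the components of $F_k(\overline{G})$ are in bijection with the non-negative integer solutions of $k_1+\cdots+k_c=k$, of which there are ${c+k-1\choose k}$. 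This is exactly the multiplicity of $0$ in $\L(F_k(\overline{G}))$, and the pairing then yields at least ${c+k-1\choose k}$ integer eigenvalues of $F_k(G)$.

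The main obstacle is the component count of $F_k(\overline{G})$: one must verify both that the token distribution among components is a genuine connected-component invariant and that, within a single connected component having at least $k$ vertices, any two configurations with the same number of tokens are joined by a path. The size condition $n_i\ge k$ is precisely what guarantees this reachability, and it is also what removes the upper constraints $k_i\le n_i$ from the enumeration, so that the clean binomial ${c+k-1\choose k}$ appears; without it one obtains only the weaker bound of the first statement.
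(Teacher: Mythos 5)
Your proof is correct and follows essentially the same route as the paper: both reduce the count of integer eigenvalues of $F_k(G)$ to the multiplicity of $0$ in $\L(F_k(\overline{G}))$ via Theorem~\ref{theo:pairing} together with the integrality of the Johnson Laplacian spectrum, and both obtain the binomial bound by counting the components of $F_k(\overline{G})$ through token distributions $(k_1,\ldots,k_c)$ over the components of $\overline{G}$. The only (harmless) difference is that for the bound $c$ you invoke the spectral containment of Theorem~\ref{th:(1,k)} applied to $\overline{G}$, whereas the paper directly asserts that $F_k(\overline{G})$ has at least $c$ components; your variant is, if anything, slightly more self-contained.
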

\begin{proof}
The multiplicity of the zero eigenvalue of $F_k(\overline{G})$ equals the number of its connected components. If $\overline{G}$ has $c$  components, say $H_1,\ldots,H_c$, then  $F_k(\overline{G})$ has  at least $c$ components. In fact, each component is some Cartesian product of the form $F_{k_1}(H_1)\times\cdots\times F_{k_c}(H_c)$ with $\sum_{i=0}^c k_i\le k$ and $k_i\le |V(H_i)|$ for $i=1,\ldots,c$. Thus, the number of
	components of $F_k(\overline{G})$ equals the number of ways of distributing $k$ indistinguishable tokens among the $c$ components of $\overline{G}$. If each component of $\overline{G}$ has at least $k$ vertices, then this number is ${{c+k-1}\choose{k}}$.  Then, Theorem~\ref{theo:pairing} completes the proof.
\end{proof}


\section{The algebraic connectivity}
\label{sec:-7}

In this last section, we study the algebraic connectivity of the token graphs. In the proof of our main result, we use the following concepts and lemma.
\\
Let $G$ be a graph with $k$-token graph $F_k(G)$.
For a vertex $a\in V(G)$, let $S_a:=\{A\in V(F_k(G)):A\ni a\}$ and $S'_a:=\{B\in V( F_k(G)): B\not\ni a\}$.
Let $H_a$ and $H'_a$ be the subgraphs of $F_k(G)$ induced by $S_a$ and $S'_a$, respectively.
Note that $H_a\cong F_{k-1}(G\setminus \{a\})$ and $H'_a\cong F_k(G\setminus \{a\})$.
\begin{lemma}
\label{lem:eigenvectors}
Given a vertex $a\in G$ and an eigenvector $\v$ of $F_k(G)$ such that $\B^{\top}\v=\vec0$, let
\[
\w_a:=\restr{\v}{S_a} \text{ and \quad } \w'_a:=\restr{\v}{S'_a}.
\]
Then, $\w_a$ and $\w'_a$ are embeddings of $H_a$ and $H'_a$, respectively.
\end{lemma}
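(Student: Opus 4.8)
The plan is to observe that, by the definition of embedding recalled in Section~\ref{sec:-2}, a vector is an embedding of a graph precisely when the sum of its entries vanishes (i.e.\ it lies in the space $W$ of vectors orthogonal to $\1$). Hence the whole statement reduces to two scalar identities: that the entries of $\w_a$ sum to zero and that the entries of $\w'_a$ sum to zero. My first step is therefore to unpack the hypothesis $\B^{\top}\v=\vec0$ coordinatewise. Recall that $\B$ is the $(n;k)$-binomial matrix, so $(\B)_{Aj}=1$ exactly when $j\in A$; reading off the $a$-th coordinate of $\B^{\top}\v$ gives $(\B^{\top}\v)_a=\sum_{A\ni a}\v(A)$.

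This coordinate is precisely the sum of the entries of $\w_a=\restr{\v}{S_a}$, since $S_a$ consists exactly of those vertices $A$ with $a\in A$. As $(\B^{\top}\v)_a=0$ by hypothesis, $\w_a$ has zero entry-sum and is thus an embedding of $H_a$. This disposes of the first half with essentially no work beyond the interpretation of $\B^{\top}\v$.

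For the second half I would first establish that the global sum $\sum_{A}\v(A)$ vanishes. This follows by summing all $n$ coordinates of $\B^{\top}\v=\vec0$: since each $k$-subset $A$ is counted once for each of its $k$ elements, $\sum_{j\in[n]}(\B^{\top}\v)_j=k\sum_{A}\v(A)$, whence $\sum_A\v(A)=0$ (equivalently, $\v\perp\1$, consistent with $\v$ being a Laplacian eigenvector for a positive eigenvalue). The entry-sum of $\w'_a=\restr{\v}{S'_a}$ is then $\sum_{B\not\ni a}\v(B)=\sum_{A}\v(A)-\sum_{A\ni a}\v(A)=0-0=0$, so $\w'_a$ is an embedding of $H'_a$.

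The argument is pure bookkeeping, so I do not anticipate a genuine obstacle; the only point needing care is to remember that the embedding condition for $H_a$ and for $H'_a$ is taken with respect to the all-ones vectors supported on $S_a$ and on $S'_a$ \emph{separately}, not on all of $V(F_k(G))$, which is exactly why both vanishing entry-sums are required. The isomorphisms $H_a\cong F_{k-1}(G\setminus\{a\})$ and $H'_a\cong F_k(G\setminus\{a\})$ quoted in the setup play no role in the embedding property itself (they matter only for how $\w_a,\w'_a$ are used afterward); the two entry-sum identities above are precisely what the definition of embedding demands.
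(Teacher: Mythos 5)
Your proof is correct and follows essentially the same route as the paper's: the $a$-th coordinate of $\B^{\top}\v=\vec0$ gives $\sum_{A\ni a}\v(A)=0$ (the paper phrases this via a block decomposition of $\B^{\top}$), and the embedding property of $\w'_a$ follows by subtracting this from the global sum $\sum_A\v(A)=0$. The only (harmless) difference is that you obtain the global sum by adding all $n$ coordinates of $\B^{\top}\v=\vec0$, whereas the paper invokes that $\v$, being an eigenvector, is itself an embedding of $F_k(G)$; your version is in fact slightly more self-contained.
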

\begin{proof}
Assume that the matrix $\B^{\top}$ has the first row indexed by   $a\in V(G)$. Then, we have
$$
\vec0=
\B^{\top}\v=
\left(
\begin{array}{c|c}
\1^{\top} & \vec0^{\top}\\
\hline
\B_1 & \B_2
\end{array}
\right)
\left(
\begin{array}{c}
\w_a \\
\hline
\w'_a
\end{array}
\right)=
\left(
\begin{array}{c}
\1^{\top}\w_a \\
\hline
\B_1\w_a+\B_2\w'_a
\end{array}
\right),
$$
where $\1^{\top}$ is a row ${n-1\choose k-1}$-vector, $\vec0$ is a row  ${n-1\choose k}$-vector, $\B_1=\B(n-1,k-1)^{\top}$, and  $\B_2=\B(n-1,k)^{\top}$.
Then, $\1^{\top}\w_a=0$, so that $\w_a$ is an embedding of $H_a$. Furthermore, since $\v$ is an embedding of $G$, we have $\1^{\top}\v=\1^{\top}\w_a+\1^{\top}\w'_a=0$ (with the appropriate dimensions of the all-1 vectors). Hence, it must be  $\1^{\top}\w'_a =0$, and $\w'_a$ is an embedding of $H'_a$.
\end{proof}

In the following result, we give the algebraic connectivity of some infinite families of graphs.
\begin{theorem}
\label{theo:alg-connec}
For each  of the following classes of graphs, the algebraic connectivity of a token graph $F_k(G)$ equals the algebraic connectivity of $G$.
\begin{itemize}
\item[$(i)$]
Let $G=K_n$ be the complete graph on $n$ vertices. Then,
$\alpha(F_k(G))=\alpha(G)=n$ for every $n$ and $k=1,\ldots,n-1$.
\item[$(ii)$]
Let $G=S_n$ be the star graph on $n$ vertices. Then,
$\alpha(F_k(G))=\alpha(G)=1$ for every $n$ and $k=1,\ldots,n-1$.
\item[$(iii)$]
Let $G=P_n$ be the path graph on $n$ vertices. Then, $\alpha(F_k(G))=\alpha(G)\linebreak =2(1-\cos(\pi/n))$ for every $n$ and  $k=1,\ldots, n-1$.
\item[$(iv)$]
Let $G= K_{n_1,n_2}$ be the complete bipartite graph on $n=n_1+n_2$ vertices, with $n_1\le n_2$. Then, $\alpha(F_k(G))=\alpha(G)=n_1$ for every $n_1,n_2$ and $k=1,\ldots,n-1$.
\end{itemize}
\end{theorem}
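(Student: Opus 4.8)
The plan is to establish both inequalities $\alpha(F_k(G))\le\alpha(G)$ and $\alpha(F_k(G))\ge\alpha(G)$. The upper bound is uniform and immediate: since $G=F_1(G)$ and, by Corollary \ref{coro:LkL1}$(i)$, the spectrum of $\L_1$ is contained in that of $\L_k$, the positive value $\alpha(G)$ is a Laplacian eigenvalue of the connected graph $F_k(G)$, so its algebraic connectivity $\alpha(F_k(G))$, being the least positive Laplacian eigenvalue, satisfies $\alpha(F_k(G))\le\alpha(G)$. For parts $(i)$ and $(ii)$ no further work is needed: the explicit Laplacian spectra computed in Section \ref{sec:-5} show that the least positive eigenvalue of the Johnson graph $F_k(K_n)\cong J(n,k)$ is $\lambda_1=n=\alpha(K_n)$, and that $F_k(S_n)$ is a doubled Johnson graph (a double odd graph when $n=2k$) whose least positive Laplacian eigenvalue is $\lambda_1=1=\alpha(S_n)$. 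Hence $(i)$ and $(ii)$ follow directly.

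The real content is the lower bound $\alpha(F_k(G))\ge\alpha(G)$ for $G=P_n$ and $G=K_{n_1,n_2}$, which I would prove by induction on $n=|V(G)|$, simultaneously for all token numbers $k$ (using $F_k\cong F_{n-k}$ to assume $k\le n/2$, with $k=1$ and small $n$, as well as the star $K_{1,n-1}$, as bases). Write $\lambda:=\alpha(F_k(G))$ and suppose, for contradiction, that $\lambda<\alpha(G)$. There is a dichotomy on the $\lambda$-eigenspace. If it contains a vector $\w$ with $\B^{\top}\w\ne\vec0$, then Corollary \ref{coro:LkL1}$(ii)$ makes $\lambda$ a positive eigenvalue of $\L_1=\L(G)$, forcing $\lambda\ge\alpha(G)$, a contradiction. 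So I may assume every $\lambda$-eigenvector $\v$ lies in $\Ker\B^{\top}$; being orthogonal to $\1$, it is an embedding of $F_k(G)$. The engine is then Lemma \ref{lem:eigenvectors}: for each vertex $a\in V(G)$ the restrictions $\w_a=\restr{\v}{S_a}$ and $\w'_a=\restr{\v}{S'_a}$ are embeddings of $H_a\cong F_{k-1}(G\setminus\{a\})$ and $H'_a\cong F_k(G\setminus\{a\})$. Classifying each edge $\{A,B\}$ of $F_k(G)$, with $A\bigtriangleup B=\{c,d\}$, by the position of $a$ (internal to $H_a$ if $a\in A\cap B$, internal to $H'_a$ if $a\notin A\cup B$, a \emph{cross} edge if $a\in\{c,d\}$) splits the Rayleigh numerator, and the variational characterization of $\alpha$ on each part gives
\[
\lambda\,\v^{\top}\v \;=\; R_{H_a}(\w_a)+R_{H'_a}(\w'_a)+C_a \;\ge\; \alpha(H_a)\,\w_a^{\top}\w_a+\alpha(H'_a)\,(\w'_a)^{\top}\w'_a+C_a ,
\]
where $R_H(\u):=\sum_{\{A,B\}\in E(H)}(\u(A)-\u(B))^2$ and the cross term $C_a\ge0$.

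For the path $G=P_n$ I would take $a$ to be an \emph{endpoint}, so that $G\setminus\{a\}=P_{n-1}$ and, by induction, $\alpha(H_a)=\alpha(H'_a)=\alpha(P_{n-1})$. Dropping $C_a\ge0$ and using $\w_a^{\top}\w_a+(\w'_a)^{\top}\w'_a=\v^{\top}\v$ yields $\lambda\ge\alpha(P_{n-1})$. Since $\alpha(P_m)=2(1-\cos(\pi/m))$ is \emph{strictly decreasing} in $m$, we get $\alpha(P_{n-1})>\alpha(P_n)>\lambda$, a contradiction; hence $\lambda\ge\alpha(P_n)$, proving $(iii)$. (Interior vertices are useless here, since $G\setminus\{a\}$ disconnects and $\alpha=0$; the point is that a single well-chosen endpoint already overshoots the target.)

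For the complete bipartite $G=K_{n_1,n_2}$ with $n_1\le n_2$ and $\alpha(G)=n_1$, the argument splits. When $n_1<n_2$, choosing $a$ on the larger side gives $G\setminus\{a\}=K_{n_1,n_2-1}$, still of algebraic connectivity $n_1$, so $\alpha(H_a)=\alpha(H'_a)=n_1$ by induction and, dropping $C_a$, $\lambda\ge n_1=\alpha(G)$, a contradiction. The genuinely delicate case, and the main obstacle, is the balanced graph $K_{m,m}$, where deleting \emph{any} vertex drops connectivity to $\alpha(K_{m-1,m})=m-1$, so no single-vertex bound suffices. Here I would \emph{sum the displayed inequality over all $n=2m$ vertices} $a$, using two bookkeeping identities: $\sum_a\bigl(\w_a^{\top}\w_a+(\w'_a)^{\top}\w'_a\bigr)=n\,\v^{\top}\v$ (each entry of $\v$ is counted once per vertex), and $\sum_a C_a=2\lambda\,\v^{\top}\v$ (each edge is a cross edge for exactly its two underlying vertices $c,d$). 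With $\alpha(H_a)=\alpha(H'_a)=m-1$, summing gives $2m\,\lambda\ge 2m(m-1)+2\lambda$, i.e. $\lambda(2m-2)\ge 2m(m-1)$, whence $\lambda\ge m=\alpha(G)$, a contradiction. The crucial feature is that the cross-edge contribution $2\lambda\,\v^{\top}\v$ lowers the effective multiplicity from $n$ to $n-2=2(m-1)$, \emph{exactly} offsetting the unit drop in connectivity caused by the vertex deletion. The difficulties I anticipate are a careful verification of the edge-classification identity (including degenerate small cases and the reduction to $k\le n/2$) and confirming that this summing bound is tight precisely in the balanced case; the rest is bookkeeping.
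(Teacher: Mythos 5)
Your treatment of parts $(i)$--$(iii)$ coincides with the paper's: $(i)$ and $(ii)$ are read off from the explicit spectra of the Johnson and doubled Johnson graphs in Section \ref{sec:-5}, and $(iii)$ is the same induction --- reduce via Corollary \ref{coro:LkL1}$(ii)$ to eigenvectors in $\Ker\B^{\top}$, restrict to $S_a$ and $S'_a$ for an endpoint $a$ using Lemma \ref{lem:eigenvectors}, drop the cross edges, and invoke the strict monotonicity of $\alpha(P_m)$. Where you genuinely diverge is $(iv)$. The paper proves $(iv)$ by a purely spectral route: it passes to the complement $\overline{G}=K_{n_1}\cup K_{n_2}$, decomposes $F_k(\overline{G})$ into Cartesian products of Johnson graphs to identify its smallest Laplacian eigenvalues $0,n_1,n_2,n,\ldots$, and then uses the pairing $\lambda_{F_k(G)}+\lambda_{F_k(\overline{G})}=\lambda_J$ of Theorem \ref{theo:pairing}, together with the upper bound $\alpha(F_k(G))\le\alpha(G)$, to force $\lambda_2=n-n_2=n_1$. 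You instead extend the vertex-deletion induction of $(iii)$: for $n_1<n_2$ a single deletion on the large side preserves $\alpha=n_1$ and suffices, while for the balanced $K_{m,m}$ you sum the exact edge decomposition $\lambda\,\v^{\top}\v=R_{H_a}(\w_a)+R_{H'_a}(\w'_a)+C_a$ over all $2m$ vertices. Your two bookkeeping identities are correct --- each entry of $\v$ is counted once per vertex, and each edge $\{A,B\}$ is a cross edge for exactly the two vertices of $A\bigtriangleup B$, so $\sum_a C_a=2\,\v^{\top}\L_k\v=2\lambda\,\v^{\top}\v$ --- and the resulting inequality $2m\lambda\ge 2m(m-1)+2\lambda$ indeed yields $\lambda\ge m$. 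This is a valid alternative; the paper itself remarks after its proof that $(iv)$ ``can also be proved by using the method in the proof of $(iii)$,'' and your averaging trick is exactly what rescues that method in the delicate balanced case, where the naive single-vertex bound only gives $m-1$. The paper's complement argument buys explicit knowledge of the bottom of the spectrum with no induction; yours is self-contained, bypasses Section \ref{sec:-6} entirely, and is the more natural template for attacking the general conjecture, at the modest cost of carrying the induction hypothesis for all token numbers simultaneously and checking the degenerate cases $k=1$ and small $n$.
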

\begin{proof}
		$(i)$ and $(ii)$ follows from the results of Section \ref{sec:-5} about the spectra of the double odd graphs and doubled Johnson graphs.
		To prove $(iii)$, let $V(P_n)=[n]$ with $i\sim i+1$ for $i=1,\ldots, n-1$. The result is readily checked for $n\le 3$. For the other cases,
		we proceed by induction on $n(\ge 4)$ and $k$. For $k=1$, the claim holds since $F_1(P_n)\simeq P_n$. For $n=4$,
		we can assume that $k=1$ or $k=2$,  and in both cases, the claim holds. So, the induction starts.
		Suppose $n>4$ and $k>1$. To our aim, by Corollary \ref{coro:LkL1}$(ii)$, it suffices to show that if $\v$ is an eigenvector of $F_k:=F_k(P_n)$,
		with $\B^{\top}\v=\vec0$, then $\lambda(\v)\geq \alpha(P_n)$.
		Without loss of generality, we may assume that $\v^{\top}\v=1$.
		As defined before, let $S_n:=\{A\in V(F_k):A \ni n\}$ and $S'_n:=\{B\in V(F_k):B\not\ni n\}$.
		Let $H_n$ and $H'_n$ be the subgraph of $F_k$ induced by $S_n$ and $S'_n$, respectively. We have $H_n\cong F_{k-1}(P_{n-1})$ and
		$H'_n\cong F_k(P_{n-1})$. Let $\w_n:=\restr{\v}{S_n}$ and $\w'_n:=\restr{\v}{S'_n}$, by Lemma~\ref{lem:eigenvectors},
we know that $\w_n$ and $\w'_n$ are embeddings of $H_n$ and $H'_n$, respectively. By the induction hypothesis, we have
\[
		\lambda(\w_n)=\frac{\sum\limits_{(A,B)\in E(H_n)}(\w_n(A)-\w_n(B))^2}{\sum\limits_{A\in V(H_n)}\w_n(A)^2}\geq \alpha(P_{n-1}),
\]
		and
		\[
		\lambda(\w'_n)=\frac{\sum\limits_{(A,B)\in E(H'_n)}(\w'_n(A)-\w'_n(B))^2}{\sum\limits_{A\in V(H'_n)}\w'_n(A)^2}\geq \alpha(P_{n-1}).
		\]
		Since $V(H_n)\cup V(H'_n)=V(F_k)$ and $\v^{\top}\v=1$, we have
\begin{align}
\lambda(\v)&=\sum\limits_{(A,B)\in E(F_k)}(\v(A)-\v(B))^2  \nonumber\\
 & \geq \sum\limits_{(A,B)\in E(H_n)}(\w_n(A)-\w_n(B))^2 + \sum\limits_{(A,B)\in E(H'_n)}(\w'_n(A)-\w'_n(B))^2 \nonumber\\
	 & \geq \alpha(P_{n-1})\Big[\sum\limits_{A\in V(H_n)}\w_n(A)^2 + \sum\limits_{B\in V(H'_n)}\w'_n(B)^2\Big] \nonumber \\
	& = \alpha(P_{n-1})\Big[ \sum\limits_{A\in V(H_n)}\v(A)^2 + \sum\limits_{B\in V(H'_n)}\v(B)^2\Big] \nonumber \\
	& = \alpha(P_{n-1}) > \alpha(P_n), \label{eq:paths-1}
\end{align}
		where (\ref{eq:paths-1}) follows from the fact that $\alpha(P_n)=2(1-\cos(\pi/n))$   (see Anderson and Morley \cite{am85} ). Furthermore, since $\lambda(\v)>\alpha(P_n)$, we get that $\alpha(P_n)$ is an eigenvalue of
		both $P_n$ and $F_k(P_n)$ with the same multiplicity. \\
		
Regarding $(iv)$, it is known that the graph $G=K_{n_1,n_2}$, with $n_1\le n_2$,
has algebraic connectivity $\alpha(G)=n_1$. To prove that $\alpha(F_k(G))=n_1$, we first consider the token graph of its complement $\overline{G}=K_{n_1}\cup K_{n_2}$. From Section \ref{sec:-5}, we know that $F_k(K_n)=J(n,k)$  with first different eigenvalues $\theta_1=0,\theta_2=n,\theta_3=2(n-1),\ldots$
Hence,
$$
F_k(\overline{G})=\bigcup_{h=0}^{\min\{k,n_1\}} [J(n_1,h)\times J(n_2,k-h)],
$$
(see the proof of Corollary \ref{coro:comp}). 		
Now, recall that the eigenvalues of a Cartesian product of two graphs $G_1,G_2$ are obtained as all possible sums of one eigenvalue of $G_1$ with one eigenvalues of $G_2$. Then,  the first different eigenvalues of $F_k(\overline{G})$ are $\sigma_1=0$, $\sigma_2=n_1$, $\sigma_3=n_2$, $\sigma_4=n_1+n_2=n$, \ldots 
Moreover, by Theorem \ref{theo:pairing}, the eigenvalues $\lambda_1\le \lambda_2\le \cdots \le \lambda_{{n\choose k}}$ of $F_k(G)$ are of the form
$
\lambda_i=\theta_r-\sigma_s
$
for some $i,r,$ and $s$. With the obtained values, we can represent this as follows:
$$
\{\lambda_1,\lambda_2,\ldots\}=\{0,n,\ldots,n,2(n-1),\ldots\}-\{0,\ldots,0,n_1,\ldots,n_1,n_2,\ldots,n_2,n,\ldots\}.
$$
This help us to understand which are the two first eigenvalues of $F_k(G)$. Indeed, $\lambda_1=0-0=0$ and $\lambda_2=\alpha(F_k(G))>0$ since  $F_k(G)$ is connected, with the following  possible cases:\\
$(a)$ $ \lambda_2=n-0=n$ is not possible because, from Theorem \ref{th:(1,k)}, $\alpha(F_k(G))$ can not be greater than $\alpha(G)$.\\
$(b)$ By the same reason, it can not be  $\lambda_2=n-n_1=n_2$, unless $n_1=n_2$ in which case $\alpha(F_k( K_{n_1,n_2}))=\alpha(K_{n_1,n_2})=n_1$.
Thus, we conclude that $\lambda_2=n-n_2=n_1=\alpha(G)$, as claimed, since all the other possible cases are trivially impossible.
\end{proof}
In fact, this last case $(iv)$ can also be proved by using the method in the proof of $(iii)$. Besides, we think that both methods could be used for other families of graphs.

Together with all the above results,  computer exploration showed that $\alpha(F_2(G))=\alpha(G)$ for all graphs with at most $8$ vertices. These results lead us to formulate the following conjecture.
\begin{conjecture}
	The algebraic connectivity of the token graph of a graph  equals the algebraic connectivity of the original graph. That is,
$$
\alpha(F_k(G))=\alpha(G)\quad \mbox{for every $k=1,\ldots,|V|-1$}.
$$
\end{conjecture}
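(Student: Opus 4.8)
The plan is to establish the two inequalities $\alpha(F_k(G))\le\alpha(G)$ and $\alpha(F_k(G))\ge\alpha(G)$ separately. The first is immediate from Theorem~\ref{th:(1,k)}: since the Laplacian spectrum of $G$ sits inside that of $F_k(G)$, the value $\alpha(G)$ is a positive eigenvalue of $F_k(G)$, and as $F_k(G)$ is connected its smallest positive eigenvalue $\alpha(F_k(G))$ cannot exceed it. Hence the entire content of the conjecture is the reverse inequality $\alpha(F_k(G))\ge\alpha(G)$, that is, the statement that $F_k(G)$ has \emph{no} positive Laplacian eigenvalue strictly below $\alpha(G)$.

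For the reverse inequality I would generalize the argument already used for paths in Theorem~\ref{theo:alg-connec}$(iii)$. Let $\v$ be an $\alpha(F_k(G))$-eigenvector. By Corollary~\ref{coro:LkL1}$(ii)$, if $\B^{\top}\v\neq\vec0$ then $\alpha(F_k(G))$ is already an eigenvalue of $G$, hence at least $\alpha(G)$, and we are done; so it suffices to treat the case $\B^{\top}\v=\vec0$. Here I would fix a vertex $a$ and invoke Lemma~\ref{lem:eigenvectors}: the restrictions $\w_a=\restr{\v}{S_a}$ and $\w'_a=\restr{\v}{S'_a}$ are embeddings of $H_a\cong F_{k-1}(G\setminus\{a\})$ and $H'_a\cong F_k(G\setminus\{a\})$. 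Discarding the edges of $F_k(G)$ joining $S_a$ to $S'_a$ and applying the Rayleigh-quotient characterization of algebraic connectivity to each induced piece yields, after normalizing $\v^{\top}\v=1$,
$$
\alpha(F_k(G))=\lambda(\v)\ge \alpha\big(F_{k-1}(G\setminus\{a\})\big)\,\w_a^{\top}\w_a+\alpha\big(F_k(G\setminus\{a\})\big)\,{\w'_a}^{\top}\w'_a.
$$
Running an induction on $n$ and $k$, the two token-graph connectivities collapse to $\alpha(G\setminus\{a\})$, and since $\w_a^{\top}\w_a+{\w'_a}^{\top}\w'_a=1$ this gives $\alpha(F_k(G))\ge\alpha(G\setminus\{a\})$.

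The hard part will be the final step: to conclude $\alpha(F_k(G))\ge\alpha(G)$ one needs a vertex $a$ with $\alpha(G\setminus\{a\})\ge\alpha(G)$. For paths this holds automatically, because $\alpha(P_{n-1})=2(1-\cos(\pi/(n-1)))>2(1-\cos(\pi/n))=\alpha(P_n)$, but in general deleting a vertex tends to \emph{lower} the algebraic connectivity, and Fiedler's inequality only guarantees $\alpha(G\setminus\{a\})\ge\alpha(G)-1$. Averaging over vertices does not rescue the bound either: since each edge of $F_k(G)$ is internal to $H_a\cup H'_a$ for exactly $n-2$ of the $n$ choices of $a$, summation produces only $\alpha(F_k(G))\ge\frac{1}{n-2}\sum_{a}\alpha(G\setminus\{a\})$, which together with Fiedler's bound falls short unless $\alpha(G)\ge n/2$. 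To close the gap one would have to retain the crossing edges discarded above, namely those moving the single token sitting at $a$, and show that their contribution compensates for the loss $\alpha(G)-\alpha(G\setminus\{a\})$, or else choose $a$ in a way adapted to $\v$ and to a Fiedler vector of $G$. Producing such a global argument valid for every graph is the crux, and is exactly what keeps the statement at the level of a conjecture.
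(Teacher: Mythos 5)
The statement you are attempting is presented in the paper as a \emph{conjecture}: the paper offers no proof of it, only supporting evidence (Theorem \ref{theo:alg-connec} for complete graphs, stars, paths, and complete bipartite graphs, a computer check of all graphs on at most $8$ vertices, and the reduction to the single case $k=\lfloor n/2\rfloor$ via Corollary \ref{coro2}). So there is no proof in the paper to measure your attempt against --- and your write-up is not a proof either, as you acknowledge in your final paragraph. What you have produced is an accurate reconstruction of the partial argument the paper actually uses: the inequality $\alpha(F_k(G))\le\alpha(G)$ from Theorem \ref{th:(1,k)} together with connectivity of $F_k(G)$; the reduction, via Corollary \ref{coro:LkL1}$(ii)$, to eigenvectors $\v$ with $\B^{\top}\v=\vec0$; and the vertex-deletion decomposition of Lemma \ref{lem:eigenvectors} with induction on $n$ and $k$. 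This is precisely the method of the proof of Theorem \ref{theo:alg-connec}$(iii)$, and the paper itself only remarks that this method ``could be used for other families of graphs,'' not that it settles the general case.

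Your diagnosis of where the method breaks is correct and is exactly the open difficulty. Discarding the edges between $S_a$ and $S'_a$ and applying the inductive hypothesis yields only $\alpha(F_k(G))\ge\alpha(G\setminus\{a\})$, which suffices for $P_n$ because $\alpha(P_{n-1})>\alpha(P_n)$, but fails in general since vertex deletion can decrease --- or, if $a$ is a cut vertex, annihilate --- the algebraic connectivity. Your averaging computation is also sound: each edge of $F_k(G)$ is internal to $H_a\cup H'_a$ for exactly $n-2$ of the $n$ choices of $a$, so summing gives $\alpha(F_k(G))\ge\frac{1}{n-2}\sum_a\alpha(G\setminus\{a\})$, and combining this with Fiedler's bound $\alpha(G\setminus\{a\})\ge\alpha(G)-1$ closes the gap only when $\alpha(G)\ge n/2$. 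To go further one must either account for the discarded crossing edges or choose $a$ adaptively with respect to $\v$; neither is done in the paper. In short, your proposal is a faithful account of the state of the art in this paper, correctly locating the obstruction, but it does not prove the conjecture and should not be presented as doing so.
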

Observe that this conjecture only needs to be proved for the case $k=\lfloor n/2 \rfloor$ because of Corollary \ref{coro2}.
Note also that the conjecture trivially holds when the graph $G$ is disconnected, since $F_k(G)$ is also disconnected and, hence, $\alpha(G)=\alpha(F_k(G))=0$.
Moreover, from Theorem \ref{theo:alg-connec}, the conjecture also holds for those graphs whose token graphs are regular. Namely, $K_n$, $S_n$ (with even $n$ and $k=n/2$), and their complements.


\end{document}